
\documentclass[a4paper,12pt,reqno]{amsart}
\usepackage{eurosym}
\usepackage{amsfonts}
\usepackage{amsmath,amsthm,amssymb}

\setcounter{MaxMatrixCols}{10}

\setlength{\textwidth}{16cm}
\setlength{\textheight}{21.5cm} \setlength{\oddsidemargin}{0cm}
\setlength{\topmargin}{-0.5cm} \setlength{\evensidemargin}{0cm}
\setlength{\footskip}{14pt}
\nonstopmode \numberwithin{equation}{section}

\newtheorem{theorem}{Theorem}
\newtheorem{corollary}{Corollary}[section]

\newtheorem{lemma}{Lemma}[section]

\allowdisplaybreaks

\begin{document}
\title[Generalized fractional operator representations of Jacobi type..]{%
Generalized fractional operator representations of Jacobi type
orthogonal polynomials.}

\begin{abstract}
The aim of this paper is to apply generalized operators of fractional
integration and differentiation involving Appell's function $F_{3}(:)$ due
to Marichev-Saigo-Maeda (MSM), to the Jacobi type orthogonal polynomials. The
results are expressed in terms of generalized hypergeometric function. 
Some of the interesting special cases of the main results also established. 

\end{abstract}

\author{K.S. Nisar}
\address{K. S. Nisar : Department of Mathematics, College of Arts and
Science-Wadi Aldawaser, Prince Sattam bin Abdulaziz University, Alkharj, Riyadh
region 11991, Saudi Arabia}
\email{ksnisar1@gmail.com, n.sooppy@psau.edu.sa}

\keywords{MSM fractional integral and differential
operators, Generalized hypergeometric function, Jacobi type polynomials.\\
\textbf{AMS 2010 Subject Classification:} 26A33, 33C05, 33C20, 33E20}
\maketitle

\section{Introduction}

The familiar generalized hypergeometric function $_{p}F_{q}$ is defined as
follows (see,\cite{Foxc, Rainville}): 
\begin{equation}  \label{eqn-1-hyper}
_{p}F_{q}\left[ 
\begin{array}{r}
\left( a _{p}\right) ; \\ 
\left( c _{q}\right) ;%
\end{array}
x\right] =\sum\limits_{n=0}^{\infty }\frac{\Pi _{j=1}^{p}\left( a_{j}\right)
_{n}}{\Pi _{j=1}^{q}\left( c _{j}\right) _{n}}\frac{x^{n}}{n! },
\end{equation}
\begin{equation*}
(p\leq q,\, x \in \mathbb{C};\,\, p=q+1,\, \left\vert x\right\vert <1),
\end{equation*}
where $(a)_{n}$ denoted by Pochhammer symbol given by
$$(a)_{n}=a(a+1)_{n-1}; (a)_{0}=1.$$

In particular, if $p=2$ and $q=1$,  \eqref{eqn-1-hyper} reduced to Gaussian hypergeometric function,
\begin{equation}\label{2F1}
{}_2F_{1}(a, b; c; x)=\sum_{k=0}^{\infty}\frac{(a)_{n}(b)_{n}}{(c)_{n}}\frac{x^{n}}{n!}.
\end{equation}

The polynomial $M_{n}^{\left( p,q\right) }\left( x\right) $ is the solution
of the differential equation%
\begin{equation}
x\left( x+1\right) y_{n}^{\prime \prime }\left( x\right) +\left( 2-p\right)
x+\left( 1+q\right) y_{n}^{\prime }\left( x\right) -n\left( n-1+p\right)
y_{n}\left( x\right) =0,  \label{Eq1}
\end{equation}%
is defined by%
\begin{equation}
M_{n}^{\left( p,q\right) }\left( x\right) =\left( -1\right)
^{n}n!\sum_{k=0}^{n}\left( 
\begin{array}{c}
p-n-1 \\ 
k%
\end{array}%
\right) \left( 
\begin{array}{c}
q+n \\ 
n-k%
\end{array}%
\right) \left( -x\right) ^{k}.  \label{Eq2}
\end{equation}%
These polynomials are orthogonal on $\left[ 0,\infty \right] $ with respect
to the weight function $w_{p,q}\left( x\right) =x^{q}\left( 1+x\right)
^{-\left( p+q\right) }$ if and only if $p>2n+1$ and $q>-1.$

The relation between $M_{n}^{\left( p,q\right) }\left( x\right) $ and
\eqref{2F1} can be expressed as:%
\begin{equation}
M_{n}^{\left( p,q\right) }\left( x\right) =\left( -1\right) ^{n}n!\left( 
\begin{array}{c}
q+n \\ 
n%
\end{array}%
\right) \text{ }_{2}F_{1}\left( -n,n+1-p;q+1;-x\right).   \label{Eq3}
\end{equation}%
The relation between Jacobi polynomial $P_{n}^{\left( \alpha ,\beta \right) }
$ and $M_{n}^{\left( p,q\right) }$ is given by 
\begin{equation}
M_{n}^{\left( p,q\right) }\left( x\right) =\left( -1\right)
^{n}n!P_{n}^{\left( a,-p-q\right) }\left( 2x+1\right),   \label{Eq4}
\end{equation}%
which can be expressed as 
\begin{equation}
P_{n}^{\left( a,-p-q\right) }\left( x\right) =\frac{\left( -1\right) ^{n}}{n!%
}M_{n}^{\left( -p-q,p\right) }\left( \frac{x-1}{2}\right).   \label{Eq5}
\end{equation}%
For more details about Jacobi polynomials and related results, one can refer \cite{Malik, Malik1, Malik2, Malik3}.



\vskip3mm 
Then the generalized fractional integral operators involving the Appell
functions $F_{3}$ are defined for $\delta $, $\delta ^{\prime }$, $\mu $, $\mu ^{\prime }$, $%
\epsilon \in \mathbb{C}$ with $\Re (\epsilon )>0$ and $x\in \mathbb{R}^{+}$as follows:
\begin{equation}
\left( I_{0{+}}^{\delta ,\delta ^{\prime },\mu ,\mu ^{\prime },\epsilon
}f\right) (x)=\frac{x^{-\delta }}{\Gamma (\epsilon )}\int_{0}^{x}(x-t)^{%
\epsilon -1}t^{-\delta ^{\prime }}F_{3}\left( \delta ,\delta ^{\prime },\mu
,\mu ^{\prime };\epsilon ;1-\frac{t}{x},1-\frac{x}{t}\right) f(t)\,\mathrm{d}t,
\label{Int-1}
\end{equation}%
and 
\begin{equation}
\left( I_{-}^{\delta ,\delta ^{\prime },\mu ,\mu ^{\prime },\epsilon
}f\right) (x)=\frac{x^{-\delta ^{\prime }}}{\Gamma (\epsilon )}%
\int_{x}^{\infty }(t-x)^{\epsilon -1}t^{-\delta }F_{3}\left( \delta ,\delta
^{\prime },\mu ,\mu ^{\prime };\epsilon ;1-\frac{t}{x},1-\frac{x}{t}\right)
f(t)\,\mathrm{d}t.  \label{Int-2}
\end{equation}%
The integral operators of the types \eqref{Int-1} and %
\eqref{Int-2} have been introduced by Marichev \cite{Marichev} and later
extended and studied by Sagio and Maeda \cite{Saigo}. Recently, many researchers (see, \cite{Baleanu, Kumar, SRN, Purohit}) have studied the image formulas for MSM fractional integral operators involving various special functions.

The corresponding fractional differential operators have their respective
forms: 
\begin{equation}
\left( D_{0+}^{\delta ,\delta ^{\prime },\mu ,\mu ^{\prime },\epsilon
}f\right) \left( x\right) =\left( \frac{\mathrm{d}}{\mathrm{d}x}\right) ^{%
\left[ \Re \left( \epsilon \right) \right] +1}\left( I_{0+}^{-\delta ^{\prime
},-\delta ,-\mu ^{\prime }+\left[ \Re \left( \epsilon \right) \right] +1,-\mu
,-\epsilon +\left[ \Re \left( \epsilon \right) \right] +1}f\right) \left(
x\right)  \label{Dif-1}
\end{equation}%
and 
\begin{equation}
\left( D_{-}^{\delta ,\delta ^{\prime },\mu ,\mu ^{\prime },\epsilon
}f\right) \left( x\right) =\left( -\frac{\mathrm{d}}{\mathrm{d}x}\right) ^{%
\left[ \Re \left( \epsilon \right) \right] +1}\left( I_{-}^{-\delta ^{\prime
},-\delta ,-\mu ^{\prime },-\mu +\left[ \Re \left( \epsilon \right) \right]
+1,-\epsilon +\left[ \Re \left( \epsilon \right) \right] +1}f\right) \left(
x\right) .  \label{Dif-2}
\end{equation}%

Here, we recall the following results (see \cite%
{Kilbas-itsf, Saigo}):

\begin{lemma}
\label{lem-1} Let $\delta ,\delta ^{\prime },\mu ,\mu ^{\prime },\epsilon
,\tau \in \mathbb{C}$ be such that $Re{(\epsilon)}>0$ and 
\begin{equation*}
\Re{(\tau)}>\max \{0,\Re{(\delta-\delta^{\prime }-\mu-\epsilon)},\Re{%
(\delta^{\prime }-\mu^{\prime })}\}.
\end{equation*}%
then there exists the relation 
\begin{equation}
\left( I_{0+}^{\delta ,\delta ^{\prime },\mu ,\mu ^{\prime },\epsilon
}\;t^{\tau -1}\right) (x)=\frac{\Gamma \left( \tau \right) \Gamma \left(
\tau +\epsilon -\delta -\delta ^{\prime }-\mu \right) \Gamma \left( \tau
+\mu {^{\prime }}-\delta {^{\prime }}\right) }{\Gamma \left( \tau +\mu {%
^{\prime }}\right) \Gamma \left( \tau +\epsilon -\delta -\delta {^{\prime }}%
\right) \Gamma \left( \tau +\epsilon -\delta {^{\prime }}-\mu \right) }%
x^{\tau -\delta -\delta ^{\prime }+\epsilon -1}
\end{equation}%
\end{lemma}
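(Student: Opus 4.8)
The plan is to reduce the claim to the evaluation of a single constant by exploiting the homogeneity of the kernel, and then to identify that constant as a Mellin transform. First I would substitute $t=xu$ (so $\mathrm{d}t=x\,\mathrm{d}u$, $u\in(0,1)$) in \eqref{Int-1} with $f(t)=t^{\tau-1}$. Using $x-t=x(1-u)$, $t^{-\delta'}=x^{-\delta'}u^{-\delta'}$, $t^{\tau-1}=x^{\tau-1}u^{\tau-1}$, $1-\frac{t}{x}=1-u$ and $1-\frac{x}{t}=1-\frac1u$, every power of $x$ collects to the single factor $x^{\tau-\delta-\delta'+\epsilon-1}$, exactly the exponent asserted. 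This leaves
\begin{equation*}
\left(I_{0+}^{\delta,\delta',\mu,\mu',\epsilon}t^{\tau-1}\right)(x)=C(\tau)\,x^{\tau-\delta-\delta'+\epsilon-1},
\end{equation*}
with
\begin{equation*}
C(\tau)=\frac{1}{\Gamma(\epsilon)}\int_0^1(1-u)^{\epsilon-1}u^{\tau-\delta'-1}F_3\!\left(\delta,\delta',\mu,\mu';\epsilon;1-u,1-\tfrac1u\right)\mathrm{d}u .
\end{equation*}
Thus $C(\tau)$ is the Mellin transform at $\tau$ of the explicit kernel $k(u)=\Gamma(\epsilon)^{-1}(1-u)^{\epsilon-1}u^{-\delta'}F_3(\dots)$ supported on $(0,1)$, and the whole problem is to show $C(\tau)$ equals the displayed ratio of six Gamma functions.

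Next I would expand $F_3$ as its defining double power series $\sum_{m,n}\frac{(\delta)_m(\delta')_n(\mu)_m(\mu')_n}{(\epsilon)_{m+n}\,m!\,n!}(1-u)^m(1-\frac1u)^n$ and note $(1-\frac1u)^n=(-1)^n(1-u)^nu^{-n}$. Integrating a term against $(1-u)^{\epsilon-1}u^{\tau-\delta'-1}$ produces a Beta integral $\int_0^1(1-u)^{\epsilon-1+m+n}u^{\tau-\delta'-1-n}\mathrm{d}u$, after which the factor $\Gamma(\epsilon+m+n)$ cancels $(\epsilon)_{m+n}$. The part carrying the first variable is benign: summing over $m$ produces the Gauss series ${}_2F_1(\delta,\mu;\tau-\delta'+\epsilon;1)$, and Gauss's summation theorem ${}_2F_1(a,b;c;1)=\frac{\Gamma(c)\Gamma(c-a-b)}{\Gamma(c-a)\Gamma(c-b)}$ (valid here since $\Re(\tau+\epsilon-\delta-\delta'-\mu)>0$) evaluates it to the three factors $\Gamma(\tau+\epsilon-\delta-\delta'-\mu)/[\Gamma(\tau+\epsilon-\delta-\delta')\,\Gamma(\tau+\epsilon-\delta'-\mu)]$, which are precisely the three $\epsilon$-dependent factors in the asserted formula.

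The hard part is the second variable. Because $1-\frac{x}{t}=1-\frac1u\to-\infty$ as $u\to0^+$, the $F_3$ series does \emph{not} converge on all of $(0,1)$, and correspondingly the individual Beta integrals $\int_0^1 u^{\tau-\delta'-1-n}(1-u)^{\cdots}\mathrm{d}u$ diverge at the origin once $n\ge\Re(\tau-\delta')$. Hence term-by-term integration in $n$ is not licit, and the naive Gauss value of the $n$-series gives the wrong analytic continuation; this is where the genuine work lies. I would instead invoke the Mellin–Barnes representation of $F_3$ (equivalently, factor $I_{0+}^{\delta,\delta',\mu,\mu',\epsilon}$ as a composition of classical one-variable Erdélyi–Kober/Saigo fractional integrals), for which Fubini applies to the absolutely convergent contour integrals: the inner $u$-integral is then a genuine Beta function, and the remaining contour integral is closed and summed by residues. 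The hypotheses on $\tau$ — in particular $\Re(\tau)>0$ and $\Re(\tau+\mu'-\delta')>0$, which control the boundary behaviour $u^{\tau-1}$ and $u^{\tau-\delta'+\mu'-1}$ of the integrand at $u=0$ — guarantee convergence of $C(\tau)$ and pin down the correct continuation.

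Collecting the residues supplies the remaining factors $\Gamma(\tau)\,\Gamma(\tau+\mu'-\delta')/\Gamma(\tau+\mu')$, and multiplying by the $\epsilon$-factors from the previous step yields the stated ratio of six Gamma functions; since both sides are meromorphic in $\tau$, analyticity then extends the identity to the full range permitted by the hypotheses. The main obstacle throughout is the Mellin–Barnes (or operator-composition) bookkeeping needed to legitimise the interchange that the direct double-series expansion cannot justify.
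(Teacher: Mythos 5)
First, a point of comparison: the paper contains \emph{no} proof of Lemma \ref{lem-1} at all --- it is quoted as a known result, with the reader referred to Saigo--Maeda and Kilbas--Sebastian (``Here, we recall the following results\ldots''). So your argument cannot be measured against an in-paper proof; it must stand on its own, and judged that way it is incomplete. What you do have is correct and shows real insight: the substitution $t=xu$ does isolate the factor $x^{\tau-\delta-\delta'+\epsilon-1}$ and reduces everything to the Mellin constant $C(\tau)$, and your diagnosis of the obstruction is exactly right --- for $u\in(0,\tfrac12)$ one has $\bigl|1-\tfrac1u\bigr|>1$, the $F_3$ double series diverges, the termwise Beta integrals in $n$ diverge at $u=0$, and the naive Gauss summation of the $n$-series genuinely yields the wrong analytic continuation (by the reflection formula it returns $\Gamma(\tau)\Gamma(\tau+\mu'-\delta')/\Gamma(\tau+\mu')$ multiplied by a spurious ratio of sines). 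The gap is that your proof stops precisely at the point you yourself call ``where the genuine work lies.'' ``Invoke the Mellin--Barnes representation \ldots closed and summed by residues'' and ``collecting the residues supplies the remaining factors'' is a plan, not a proof: no contour or convergence estimate is specified, no residue is computed, and nothing shows the residue sum collapses to those three gamma factors. Worse, the bookkeeping is internally inconsistent: the three $\epsilon$-dependent factors you propose to multiply back in were obtained \emph{inside} the very termwise expansion you have just declared illicit (their derivation rests on cancelling $\Gamma(\epsilon+m+n)$ against $(\epsilon)_{m+n}$ under the integral sign, which decouples the $m$- and $n$-sums), so a rigorous contour argument cannot simply inherit them; it must deliver all six gamma factors at once, or the decoupling itself must be justified.

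The gap can in fact be closed elementarily, inside your framework. Write the kernel by partial summation, $F_3\bigl(\delta,\delta',\mu,\mu';\epsilon;1-u,1-\tfrac1u\bigr)=\sum_{m\geq0}\frac{(\delta)_m(\mu)_m}{(\epsilon)_m\,m!}(1-u)^m\,{}_2F_1\bigl(\delta',\mu';\epsilon+m;1-\tfrac1u\bigr)$, which is also the correct analytic-continuation meaning of the kernel for $u<\tfrac12$; then apply Pfaff's transformation ${}_2F_1\bigl(\delta',\mu';\epsilon+m;1-\tfrac1u\bigr)=u^{\delta'}\,{}_2F_1\bigl(\delta',\epsilon+m-\mu';\epsilon+m;1-u\bigr)$, which cancels the troublesome $u^{-\delta'}$ exactly. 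After $v=1-u$ the inner integral is the classical Euler integral
\begin{equation*}
\int_0^1 v^{\epsilon+m-1}(1-v)^{\tau-1}\,{}_2F_1\bigl(\delta',\epsilon+m-\mu';\epsilon+m;v\bigr)\,\mathrm{d}v
=\frac{\Gamma(\epsilon+m)\,\Gamma(\tau)\,\Gamma(\tau+\mu'-\delta')}{\Gamma(\tau+\epsilon+m-\delta')\,\Gamma(\tau+\mu')},
\end{equation*}
convergent precisely when $\Re(\tau)>0$ and $\Re(\tau+\mu'-\delta')>0$; the factor $\Gamma(\epsilon+m)$ cancels $\Gamma(\epsilon)(\epsilon)_m$, and the remaining $m$-sum is your Gauss step, producing $\Gamma(\tau+\epsilon-\delta-\delta'-\mu)/\bigl[\Gamma(\tau+\epsilon-\delta-\delta')\Gamma(\tau+\epsilon-\delta'-\mu)\bigr]$. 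This yields all six gamma factors in one coherent computation, and it also exposes a typo in the lemma as printed: the condition actually needed (and the one your own Gauss step silently uses) is $\Re(\tau)>\max\{0,\Re(\delta+\delta'+\mu-\epsilon),\Re(\delta'-\mu')\}$, as in Saigo--Maeda, not $\Re(\tau)>\Re(\delta-\delta'-\mu-\epsilon)$.
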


\begin{lemma}
\label{lem-2} Let $\delta $, $\delta ^{\prime }$, $\mu $, $\mu ^{\prime }$%
, $\epsilon $, $\tau \in \mathbb{C}$ such that $\Re (\epsilon )>0$ and 
\begin{equation*}
\Re {(\tau )}>\max \{\Re {(\mu )},\,\Re {(-\delta -\delta ^{\prime
}+\epsilon )},\,\Re {(-\delta -\mu ^{\prime }+\epsilon )}\},
\end{equation*}%
then  
\begin{equation}
\left( I_{-}^{\delta ,\delta ^{\prime },\mu ,\mu ^{\prime },\epsilon
}t^{-\tau }\right) (x)  \label{Lem2-Eq2} \\
=\frac{\Gamma \left( -\mu +\tau \right) \Gamma \left( \delta +\delta
^{\prime }-\epsilon +\tau \right) \Gamma \left( \delta +\mu {\ ^{\prime }}%
-\epsilon +\tau \right) }{\Gamma \left( \tau \right) \Gamma \left( \delta
-\mu +\tau \right) \Gamma \left( \delta +\delta {^{\prime }+\mu }%
^{^{\prime }}-\epsilon +\tau \right) }x^{-\delta -\delta ^{\prime }+\epsilon
-\tau }.  \notag
\end{equation}
\end{lemma}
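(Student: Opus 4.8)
The plan is to substitute $f(t)=t^{-\tau}$ into the definition \eqref{Int-2}, giving
$$\left(I_{-}^{\delta,\delta',\mu,\mu',\epsilon}t^{-\tau}\right)(x)=\frac{x^{-\delta'}}{\Gamma(\epsilon)}\int_{x}^{\infty}(t-x)^{\epsilon-1}\,t^{-\delta-\tau}\,F_{3}(\cdots)\,\mathrm{d}t ,$$
and to evaluate this integral explicitly. First I would make the change of variable $t=x\eta$ (equivalently $u=x/t$), which maps the range to a fixed interval, factors out the anticipated power $x^{-\delta-\delta'+\epsilon-\tau}$, and reduces the kernel to powers of $\eta-1$ and $\eta$ multiplying the Appell function; it then remains only to compute a pure number.

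Next I would expand $F_{3}$ as a power series in whichever of its two arguments stays inside the unit disk along the whole contour, using $(\epsilon)_{m+n}=(\epsilon)_{m}(\epsilon+m)_{n}$ to collapse the inner sum to a single Gauss function ${}_{2}F_{1}$ in the remaining argument. A Pfaff transformation puts that ${}_{2}F_{1}$'s argument into $(0,1)$, after which the $\eta$-integral becomes an Euler beta integral of a ${}_{2}F_{1}$; since the third parameter of this ${}_{2}F_{1}$ equals the exponent carried by the beta weight, the ${}_{3}F_{2}$ produced by Euler's formula degenerates to a ${}_{2}F_{1}$ at unit argument, which Gauss's theorem sums. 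The outer summation index then survives in a single series that is again a ${}_{2}F_{1}$ at unit argument; a second application of Gauss's theorem and cancellation of the common $\Gamma$-factors deliver the six-Gamma quotient of \eqref{Lem2-Eq2}.

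The step I expect to be the main obstacle is justifying the interchange of summation and integration and the convergence of the two terminal ${}_{2}F_{1}(1)$'s. A full double-series expansion of $F_{3}$ is not legitimate here: for the argument that leaves the unit disk the term-by-term integral is a beta integral that diverges, and feeding the corresponding divergent series into Gauss's theorem would return an incorrect constant; expanding only the bounded argument and carrying the other through the Pfaff-transformed ${}_{2}F_{1}$ is exactly what repairs this. The three hypotheses are then forced upon us at precise points: $\Re(\tau)>\Re(\epsilon-\delta-\delta')$ secures convergence of the Euler beta integral, while $\Re(\tau)>\Re(\epsilon-\delta-\mu')$ and $\Re(\tau)>\Re(\mu)$ are the Gauss conditions $\Re(c-a-b)>0$ for the first and second summations respectively. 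As an independent check, Lemma \ref{lem-2} is the image of Lemma \ref{lem-1} under the inversion $t\mapsto 1/t$, $x\mapsto 1/x$, which sends $I_{-}$ to $I_{0+}$ up to a power-function reweighting; specializing Lemma \ref{lem-1} to the exponent $\sigma=\delta+\delta'+\tau-\epsilon$ reproduces \eqref{Lem2-Eq2} and confirms every constant.
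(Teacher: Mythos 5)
The paper contains no proof of Lemma \ref{lem-2} to compare you against: it is stated as a recalled result, cited to \cite{Kilbas-itsf} and \cite{Saigo}. Your proposal is therefore, by necessity, a genuinely different route --- a self-contained derivation from the integral definition --- and, checked against the statement, it is correct. After $t=x\eta$ the power $x^{-\delta-\delta'+\epsilon-\tau}$ factors out; expanding $F_{3}$ only in its bounded argument and Pfaff-transforming the ${}_{2}F_{1}$ that carries the unbounded one (your diagnosis that the full double-series expansion is illegitimate is exactly right) leads to $\int_{0}^{1}u^{\epsilon+m-1}(1-u)^{\delta+\delta'+\tau-\epsilon-1}\,{}_{2}F_{1}(\delta',\epsilon+m-\mu';\epsilon+m;u)\,du$, whose Euler ${}_{3}F_{2}$ collapses because the exponent $\epsilon+m$ equals the lower ${}_{2}F_{1}$ parameter; Gauss's theorem applied to the inner and then the outer sum produces precisely the six-Gamma quotient of \eqref{Lem2-Eq2}, with the three hypotheses entering exactly where you place them. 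What the paper's choice buys is brevity; what yours buys is an actual proof, at the cost of the summation--integration interchange, which you flag but do not discharge (dominated convergence on $(0,1)$ after the Pfaff step, plus analytic continuation in the parameters, closes it).

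Two further points. First, your closing ``independent check'' is in fact the cleanest complete proof available: the inversion $t\mapsto 1/s$, $x\mapsto 1/y$ gives $\bigl(I_{-}^{\delta,\delta',\mu,\mu',\epsilon}t^{-\tau}\bigr)(1/y)=y^{\delta+\delta'-\epsilon+1}\bigl(I_{0+}^{\delta,\delta',\mu,\mu',\epsilon}s^{\sigma-1}\bigr)(y)$ with $\sigma=\delta+\delta'+\tau-\epsilon$, and Lemma \ref{lem-1} then returns \eqref{Lem2-Eq2} with matching hypotheses and no new convergence issues; I would make that the main argument and relegate the series computation to a verification. Second, a caveat: both your direct computation and this inversion identity require the kernel of $I_{-}$ to carry the arguments in the order $\bigl(1-\frac{x}{t},\,1-\frac{t}{x}\bigr)$, the standard Saigo--Maeda ordering. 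As printed, \eqref{Int-2} has the order $\bigl(1-\frac{t}{x},\,1-\frac{x}{t}\bigr)$, copied from \eqref{Int-1}; taken literally this pairs the bounded argument with $(\delta',\mu')$ instead of $(\delta,\mu)$, and the same calculation then terminates in a quotient containing $\Gamma(2\delta+\tau-\epsilon)$, not in \eqref{Lem2-Eq2}. Your stated convergence conditions show you implicitly used the correct ordering, so your argument stands, but since your proof starts ``from the definition,'' this discrepancy --- a typo in \eqref{Int-2} --- should be flagged explicitly.
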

Also, we need the following lemmas \cite{Kilbas-itsf}:
\begin{lemma}\label{lem-3}
Let $\delta, \mu, \epsilon \in \mathbb{C}$ be such that $\Re(\delta)>0, \Re(\tau)>max[0, \Re(\mu-\epsilon)]$
then
\begin{equation}\label{lem3-eqn1}
\left( I_{0+}^{\delta ,\mu,\epsilon
}\;t^{\tau -1}\right) (x)=\frac{\Gamma \left( \tau \right) \Gamma \left(
\tau +\epsilon -\mu \right)}{\Gamma \left( \tau -\mu\right) \Gamma \left( \tau +\epsilon +\delta
\right)}%
x^{\tau -\mu -1}.
\end{equation}%
\end{lemma}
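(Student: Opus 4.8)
The plan is to work directly from the Saigo definition of the three-parameter operator,
\[
\left( I_{0+}^{\delta ,\mu ,\epsilon }f\right)(x)=\frac{x^{-\delta -\mu }}{\Gamma (\delta )}\int_{0}^{x}(x-t)^{\delta -1}\,{}_{2}F_{1}\!\left( \delta +\mu ,-\epsilon ;\delta ;1-\tfrac{t}{x}\right) f(t)\,\mathrm{d}t,
\]
which is valid for $\Re (\delta )>0$, insert $f(t)=t^{\tau -1}$, and reduce the resulting integral to a Beta-type form. First I would apply the scaling $t=xu$; after collecting the powers of $x$ outside the integral the entire $x$-dependence collapses to the factor $x^{\tau -\mu -1}$ appearing in \eqref{lem3-eqn1}, leaving $\frac{1}{\Gamma(\delta)}\int_0^1 u^{\tau-1}(1-u)^{\delta-1}\,{}_2F_1(\delta+\mu,-\epsilon;\delta;1-u)\,\mathrm{d}u$. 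A further substitution $v=1-u$ moves the hypergeometric argument into canonical position and gives $\frac{1}{\Gamma(\delta)}\int_0^1 v^{\delta-1}(1-v)^{\tau-1}\,{}_2F_1(\delta+\mu,-\epsilon;\delta;v)\,\mathrm{d}v$.

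Next I would expand the ${}_2F_1$ by its defining series \eqref{2F1} and integrate term by term, each summand producing a Beta integral $\int_0^1 v^{\delta-1+n}(1-v)^{\tau-1}\,\mathrm{d}v=\Gamma(\delta+n)\Gamma(\tau)/\Gamma(\delta+\tau+n)$; the interchange of summation and integration is legitimate because $\Re(\delta)>0$ and $\Re(\tau)>0$. Rewriting the Gamma quotients through Pochhammer symbols identifies the sum as a ${}_3F_2$ of unit argument with numerator parameters $\delta+\mu,-\epsilon,\delta$ and denominator parameters $\delta,\delta+\tau$. The decisive simplification is that the repeated parameter $\delta$ cancels between numerator and denominator, collapsing the ${}_3F_2$ to the Gauss series ${}_2F_1(\delta+\mu,-\epsilon;\delta+\tau;1)$.

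The final step is to evaluate this series by the Gauss summation theorem $\,{}_2F_1(a,b;c;1)=\Gamma(c)\Gamma(c-a-b)/[\Gamma(c-a)\Gamma(c-b)]$, whose hypothesis $\Re(c-a-b)>0$ becomes exactly $\Re(\tau-\mu+\epsilon)>0$, i.e.\ the assumption $\Re(\tau)>\Re(\mu-\epsilon)$. With $a=\delta+\mu$, $b=-\epsilon$, $c=\delta+\tau$ this yields $\Gamma(\delta+\tau)\Gamma(\tau+\epsilon-\mu)/[\Gamma(\tau-\mu)\Gamma(\delta+\tau+\epsilon)]$. Multiplying by the factor $\Gamma(\delta)\Gamma(\tau)/[\Gamma(\delta)\Gamma(\delta+\tau)]$ accumulated from the Beta integrals and by the leading $x^{\tau-\mu-1}$, the $\Gamma(\delta)$ and $\Gamma(\delta+\tau)$ terms cancel and \eqref{lem3-eqn1} drops out.

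I expect the only real obstacle to be bookkeeping the domain conditions so that both operations are simultaneously licensed: term-by-term integration needs $\Re(\tau)>0$, while Gauss summation needs $\Re(\tau)>\Re(\mu-\epsilon)$, and these are precisely combined in the stated hypothesis $\Re(\tau)>\max[0,\Re(\mu-\epsilon)]$. Everything else is routine manipulation of Gamma functions and Pochhammer symbols.
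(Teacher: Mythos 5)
Your proof is correct, but note that the paper itself offers no proof of this lemma at all: it is quoted verbatim as a known result from Kilbas--Sebastian \cite{Kilbas-itsf}, so your argument supplies what the paper only cites. Your route --- writing out Saigo's kernel ${}_2F_1(\delta+\mu,-\epsilon;\delta;1-t/x)$, scaling to $[0,1]$, integrating the Gauss series termwise against the Beta density, cancelling the repeated parameter $\delta$, and closing with Gauss's summation theorem --- is in fact the classical derivation of this formula (essentially the one in the cited source), and every step checks out: the exponent $\tau-\mu-1$, the collapse of the ${}_3F_2(1)$ to ${}_2F_1(\delta+\mu,-\epsilon;\delta+\tau;1)$, and the Gauss evaluation $\Gamma(\delta+\tau)\Gamma(\tau+\epsilon-\mu)/[\Gamma(\tau-\mu)\Gamma(\delta+\tau+\epsilon)]$ all combine to the stated right-hand side. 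Two remarks. First, there is a shorter route internal to the paper: Lemma \ref{lem-3} follows from Lemma \ref{lem-1} by the same specialization the author uses to pass from Theorem \ref{Th1} to Corollary \ref{Cor1}, namely the reduction of the MSM operator to Saigo's operator, $I_{0+}^{\delta+\mu,\,0,\,-\epsilon,\,0,\,\delta}=I_{0+}^{\delta,\mu,\epsilon}$; substituting these parameters into Lemma \ref{lem-1} reproduces \eqref{lem3-eqn1} after one line of Gamma bookkeeping (this also exposes a typo in Lemma \ref{lem-1}: its hypothesis should involve $\Re(\delta+\delta'+\mu-\epsilon)$ rather than $\Re(\delta-\delta'-\mu-\epsilon)$, since only the former specializes to the required $\Re(\mu-\epsilon)$). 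Second, a small imprecision in your justification: the term-by-term integration is not licensed by $\Re(\delta)>0$ and $\Re(\tau)>0$ alone, because the $n$-th term of the integrated series behaves like $n^{\Re(\mu-\epsilon)-\Re(\tau)-1}$, so absolute convergence (hence Fubini--Tonelli) also needs $\Re(\tau)>\Re(\mu-\epsilon)$ --- exactly the other half of the hypothesis $\Re(\tau)>\max[0,\Re(\mu-\epsilon)]$, which you invoke only later for the Gauss summation; both conditions are needed simultaneously at the interchange step (except in the degenerate case where $\epsilon$ is a non-negative integer and the series terminates).
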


In particular,
\begin{equation}\label{lem3-eqn2}
\left( I_{0+}^{\delta
}\;t^{\tau -1}\right) (x)=\frac{\Gamma \left( \tau \right)}{\Gamma \left( \tau+\delta
\right)}%
x^{\tau+ \delta-1}, \Re(\delta)>0, \Re(\tau)>0,
\end{equation}%

\begin{equation}\label{lem3-eqn3}
\left( I_{\epsilon,\delta}^{+
}\;t^{\tau -1}\right) (x)=\frac{\Gamma \left( \tau+\epsilon \right)}{\Gamma \left( \tau+\delta+\epsilon
\right)}%
x^{\tau+ \delta-1}, \Re(\delta)>0, \Re(\tau)>\Re(\epsilon).
\end{equation}%

\begin{lemma}\label{lem-4}
Let $\delta, \mu, \epsilon \in \mathbb{C}$ be such that $\Re(\delta)>0, \Re(\tau)<1+min[\Re(\mu), \Re(\epsilon)]$
then 
\begin{equation}\label{lem4-eqn1}
\left( I_{-}^{\delta ,\mu,\epsilon
}\;t^{\tau -1}\right) (x)=\frac{\Gamma \left( \mu-\tau+1 \right) \Gamma \left(
\epsilon -\tau+1 \right)}{\Gamma \left( 1-\tau\right) \Gamma \left( \delta+\mu+\epsilon-\tau+1
\right)}%
x^{\tau -\mu -1}.
\end{equation}%
\end{lemma}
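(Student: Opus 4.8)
The plan is to evaluate the action of the right-sided Saigo operator $I_{-}^{\delta,\mu,\epsilon}$ directly on the power function $t^{\tau-1}$ by unfolding its integral representation and reducing everything to a single Euler (beta-type) integral. Recalling that
\[
\left(I_{-}^{\delta,\mu,\epsilon}f\right)(x)=\frac{1}{\Gamma(\delta)}\int_{x}^{\infty}(t-x)^{\delta-1}t^{-\delta-\mu}\,{}_{2}F_{1}\!\left(\delta+\mu,-\epsilon;\delta;1-\tfrac{x}{t}\right)f(t)\,\mathrm{d}t,
\]
I would first substitute $f(t)=t^{\tau-1}$ and then change the variable by $t=x/u$, which maps the ray $[x,\infty)$ onto $(0,1]$ and turns the hypergeometric argument $1-x/t$ into $1-u$. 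A short bookkeeping of the powers of $x$, namely $(\delta-1)+(-\delta-\mu)+(\tau-1)+1=\tau-\mu-1$, collapses the entire $x$-dependence to the single factor $x^{\tau-\mu-1}$ predicted by the statement, leaving a $u$-integral over $[0,1]$. A further reflection $v=1-u$ puts the Gauss function in the standard form ${}_{2}F_{1}(\delta+\mu,-\epsilon;\delta;v)$ weighted by $v^{\delta-1}(1-v)^{\mu-\tau}$.

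Next I would expand the Gauss function as its defining power series and integrate term by term. Each term produces the beta integral $\int_{0}^{1}v^{\delta+k-1}(1-v)^{\mu-\tau}\,\mathrm{d}v=B(\delta+k,\mu-\tau+1)$, whose convergence at the endpoint $v=1$ demands $\Re(\tau)<\Re(\mu)+1$. Writing this value as a ratio of gamma functions and using $(\delta)_{k}=\Gamma(\delta+k)/\Gamma(\delta)$, the factor $\Gamma(\delta+k)/(\delta)_{k}=\Gamma(\delta)$ cancels the prefactor $1/\Gamma(\delta)$; after extracting $\Gamma(\mu-\tau+1)/\Gamma(\delta+\mu-\tau+1)$ the surviving sum reorganizes itself into the unit-argument value
\[
{}_{2}F_{1}\!\left(\delta+\mu,\,-\epsilon;\,\delta+\mu-\tau+1;\,1\right).
\]

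To finish, I would invoke Gauss's summation theorem ${}_{2}F_{1}(a,b;c;1)=\Gamma(c)\Gamma(c-a-b)/\big(\Gamma(c-a)\Gamma(c-b)\big)$ with $a=\delta+\mu$, $b=-\epsilon$, $c=\delta+\mu-\tau+1$; here $c-a-b=\epsilon-\tau+1$, so its hypothesis $\Re(c-a-b)>0$ is exactly the second half $\Re(\tau)<\Re(\epsilon)+1$ of the stated restriction. Substituting the resulting quotient of gammas, the factor $\Gamma(\delta+\mu-\tau+1)$ cancels and one is left with precisely $\Gamma(\mu-\tau+1)\Gamma(\epsilon-\tau+1)\big/\big(\Gamma(1-\tau)\Gamma(\delta+\mu+\epsilon-\tau+1)\big)\,x^{\tau-\mu-1}$, which is \eqref{lem4-eqn1}. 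The two endpoint requirements combine to give the hypothesis $\Re(\tau)<1+\min\{\Re(\mu),\Re(\epsilon)\}$, while $\Re(\delta)>0$ is what makes the operator well defined in the first place.

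I expect the only genuinely delicate point to be the rigorous justification of the term-by-term integration together with the convergence of the improper integral at $t=\infty$; both are governed by the same minimum condition on $\Re(\tau)$, so once that is in force the interchange is legitimate by absolute convergence. A shorter, if less self-contained, alternative is to read the result off Lemma \ref{lem-2}: setting the MSM parameter $\delta'=0$ collapses Appell's $F_{3}$ to a Gauss function and identifies $I_{-}^{\delta,\mu,\epsilon}$ with $I_{-}^{\delta+\mu,\,0,\,-\epsilon,\,\ast,\,\delta}$ (a Pfaff transformation reconciles the two kernels and shuffles the power of $t$), after which substituting $\tau\mapsto 1-\tau$ in \eqref{Lem2-Eq2} reproduces \eqref{lem4-eqn1} after a single cancellation of gamma factors.
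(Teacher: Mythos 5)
Your argument is correct, but there is nothing in the paper to compare it against: Lemma \ref{lem-4} is not proved there at all --- it is recalled, without proof, from Kilbas and Sebastian \cite{Kilbas-itsf}, and is used only to recover corollaries of the main theorems. So your proposal supplies a genuine, self-contained derivation of a cited result. Checking it: the substitutions $t=x/u$ and $v=1-u$ do collapse the $x$-dependence to $x^{\tau-\mu-1}$ and leave $\frac{1}{\Gamma(\delta)}\int_0^1 v^{\delta-1}(1-v)^{\mu-\tau}\,{}_2F_1(\delta+\mu,-\epsilon;\delta;v)\,\mathrm{d}v$; termwise integration gives $B(\delta+k,\mu-\tau+1)$, the factors $\Gamma(\delta+k)/\bigl((\delta)_k\Gamma(\delta)\bigr)$ cancel, and the sum reorganizes into $\frac{\Gamma(\mu-\tau+1)}{\Gamma(\delta+\mu-\tau+1)}\,{}_2F_1(\delta+\mu,-\epsilon;\delta+\mu-\tau+1;1)$, which Gauss's theorem evaluates to the stated ratio after $\Gamma(\delta+\mu-\tau+1)$ cancels. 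The two constraints $\Re(\tau)<1+\Re(\mu)$ and $\Re(\tau)<1+\Re(\epsilon)$ enter exactly where you say (beta convergence and Gauss's hypothesis), and the latter also yields the absolute convergence of the series at unit argument (terms of order $k^{\Re(\tau-\epsilon)-2}$), which legitimizes the interchange. Your shorter alternative is also sound and is the route most consonant with the paper's own logic: substituting $(\delta,\delta',\mu,\mu',\epsilon)\mapsto(\delta+\mu,0,-\epsilon,0,\delta)$ and $\tau\mapsto 1-\tau$ in \eqref{Lem2-Eq2} produces the numerator $\Gamma(\epsilon-\tau+1)\Gamma(\mu-\tau+1)\Gamma(\mu-\tau+1)$ over the denominator $\Gamma(1-\tau)\Gamma(\delta+\mu+\epsilon-\tau+1)\Gamma(\mu-\tau+1)$, hence \eqref{lem4-eqn1} after one cancellation --- this is exactly the specialization the paper itself performs to pass from Theorem \ref{Th2} to Corollary \ref{Cor4}. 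The Euler-integral proof buys self-containedness, needing only the beta integral and Gauss summation; the specialization buys brevity but presupposes the deeper Marichev--Saigo--Maeda evaluation of Lemma \ref{lem-2}.
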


In particular,
\begin{equation}\label{lem4-eqn2}
\left( I_{-}^{\delta
}\;t^{\tau -1}\right) (x)=\frac{\Gamma \left( 1-\delta-\tau \right)}{\Gamma \left(1- \tau
\right)}%
x^{\tau+ \delta-1}, 0<\Re(\delta)<1-\Re(\tau),
\end{equation}%

\begin{equation}\label{lem4-eqn3}
\left( K_{\epsilon,\delta}^{+
}\;t^{\tau -1}\right) (x)=\frac{\Gamma \left(\epsilon-\tau+1 \right)}{\Gamma \left(\delta+\epsilon-\tau+1
\right)}%
x^{\tau-1}, \Re(\tau)<1+\Re(\epsilon).
\end{equation}%

The main aim of this paper is to apply the generalized operators of fractional
calculus for the Jacobi type orthogonal polynomials in order to get certain new
image formulas. The basic definitions and results of fractional calculus,
one may refer to \cite{Kim, Kiryakova, Miller, Srivastava2}.

\section{Fractional integrals of Jacobi type orthogonal polynomials}
In this section, we derive the following theorems,
\begin{theorem} \label{Th1}
Let $\delta ,\delta ^{\prime },\mu ,\mu ^{\prime },\epsilon ,\tau
\in \mathbb{C}$ be such that $Re{(\epsilon )}>0$ and 
\begin{equation*}
\Re {(\tau )}>\max \{0,\Re {(\delta -\delta ^{\prime }-\mu -\epsilon )}%
,\Re {(\delta ^{\prime }-\mu ^{\prime })}\}.
\end{equation*}%
then 
\begin{eqnarray*}
&&\left( I_{0+}^{\delta ,\delta ^{\prime },\mu ,\mu ^{\prime },\epsilon
}t^{\tau -1}~M_{n}^{\left( p,q\right) }\left( t\right) \right) (x) \\
&=&x^{\tau -\delta -\delta ^{^{\prime }}+\epsilon -1}\frac{\left( -1\right)
^{n}\Gamma \left( q+n+1\right)\Gamma(\tau)\Gamma(\tau+\delta-\delta'-\mu)\Gamma(\tau+\mu'-\delta') }{ \Gamma \left(q+1\right)\Gamma(\tau+\mu')\Gamma(\tau+\epsilon-\delta-\delta')\Gamma(\tau+\epsilon-\delta'-\mu) } \\
&&\times _{5}F _{4}\left[ 
\begin{array}{c}
1+n-p, -n, \tau, \tau+\delta-\delta'-\mu, \tau+\mu'-\delta', \\ 
1+q, \tau+\mu', \tau+\epsilon-\delta-\delta', \tau+\epsilon-\delta'-\mu,%
\end{array}%
\Big|-x\right] .
\end{eqnarray*}
\end{theorem}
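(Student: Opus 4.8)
The plan is to expand the polynomial into a terminating hypergeometric series, apply the single-power formula of Lemma~\ref{lem-1} termwise, and reassemble the output into a ${}_{5}F_{4}$. First I would invoke the representation \eqref{Eq3} to write
\begin{equation*}
M_{n}^{(p,q)}(t)=(-1)^{n}\frac{\Gamma(q+n+1)}{\Gamma(q+1)}\sum_{k=0}^{n}\frac{(-n)_{k}\,(n+1-p)_{k}}{(q+1)_{k}}\frac{(-t)^{k}}{k!},
\end{equation*}
using $(-1)^{n}n!\binom{q+n}{n}=(-1)^{n}\Gamma(q+n+1)/\Gamma(q+1)$. Multiplying by $t^{\tau-1}$ turns the integrand into a finite linear combination of the monomials $t^{\tau+k-1}$, $0\le k\le n$.

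Because the sum terminates, I would interchange it with the operator $I_{0+}^{\delta,\delta',\mu,\mu',\epsilon}$ without any convergence concern and apply Lemma~\ref{lem-1} to each monomial, with $\tau$ replaced by $\tau+k$. The hypothesis $\Re(\tau)>\max\{0,\Re(\delta-\delta'-\mu-\epsilon),\Re(\delta'-\mu')\}$ secures the admissibility condition of Lemma~\ref{lem-1} for every index, since $\Re(\tau+k)\ge\Re(\tau)$ for $k\ge 0$. Each term then yields the common power $x^{\tau-\delta-\delta'+\epsilon-1}\,x^{k}$ times a ratio of six Gamma functions evaluated at the shifted argument $\tau+k$.

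The core step is the Pochhammer bookkeeping. Writing $\Gamma(a+k)=\Gamma(a)(a)_{k}$ for each of the six Gammas, the three numerator factors of Lemma~\ref{lem-1} produce $(\tau)_{k}$, $(\tau+\epsilon-\delta-\delta'-\mu)_{k}$, $(\tau+\mu'-\delta')_{k}$, and the three denominator factors produce $(\tau+\mu')_{k}$, $(\tau+\epsilon-\delta-\delta')_{k}$, $(\tau+\epsilon-\delta'-\mu)_{k}$, with the $k=0$ values collected into the constant prefactor. Combining these with $(-n)_{k}$, $(n+1-p)_{k}$ from the numerator and $(q+1)_{k}$, $k!$ from the denominator of the polynomial expansion, and noting that $(-1)^{k}$ together with $x^{k}$ forms $(-x)^{k}$, the residual sum over $k$ is exactly the ${}_{5}F_{4}$ series in $-x$ with the five upper parameters $-n$, $n+1-p$, $\tau$, $\tau+\epsilon-\delta-\delta'-\mu$, $\tau+\mu'-\delta'$ and the four lower parameters $q+1$, $\tau+\mu'$, $\tau+\epsilon-\delta-\delta'$, $\tau+\epsilon-\delta'-\mu$.

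The only real obstacle is organizational rather than analytic: one must check that each surviving Pochhammer symbol lands in its correct numerator or denominator slot and that the constant factor absorbs precisely the $k=0$ Gamma values together with $(-1)^{n}\Gamma(q+n+1)/\Gamma(q+1)$. No limiting or dominated-convergence argument is needed beyond the termwise use of Lemma~\ref{lem-1}, since the defining expansion of $M_{n}^{(p,q)}$ is a polynomial of degree $n$ and the interchange of the finite sum with the operator is justified by linearity alone.
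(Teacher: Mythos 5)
Your proof is correct and follows essentially the same route as the paper: expand the polynomial into its terminating series (the paper starts from the binomial form \eqref{Eq2} and converts the binomial coefficients into Pochhammer symbols, you start directly from the ${}_2F_1$ form \eqref{Eq3} — the same expansion), interchange the finite sum with the operator, apply Lemma~\ref{lem-1} termwise at $\tau+k$, and recollect via $\Gamma(a+k)=(a)_{k}\Gamma(a)$ into a ${}_{5}F_{4}$. One remark: the upper parameter $\tau+\epsilon-\delta-\delta'-\mu$ that your bookkeeping produces is what Lemma~\ref{lem-1} actually gives, whereas the theorem statement (and the paper's own intermediate display, which writes $\Gamma(\tau+k+\delta-\delta'-\mu)$) has $\tau+\delta-\delta'-\mu$; that is a typographical slip in the paper, and your version is the consistent one.
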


\begin{proof}
\label{Pf1} Applying the definition of $M_{n}^{\left( p,q\right) }\left(
x\right) $ given in \eqref{Eq2} and denote the left hand side by \emph{I}$_{1}$, we
get 
\begin{eqnarray*}
\emph{I}_{1} &=&\left( I_{0+}^{\delta ,\delta ^{\prime },\mu ,\mu ^{\prime
},\epsilon }t^{\tau -1}~M_{n}^{\left( p,q\right) }\left( t\right) \right) (x),
\\
&=&\left( I_{0+}^{\delta ,\delta ^{\prime },\mu ,\mu ^{\prime },\epsilon
}t^{\tau -1}\left( -1\right) ^{n}n!\sum\limits_{k=0}^{\infty }\left( 
\begin{array}{c}
p-n-1 \\ 
k%
\end{array}%
\right) 
\left( 
\begin{array}{c}
q+n \\ 
n-k%
\end{array}%
\right) \text{ }\left( -t\right) ^{k}\right) \left( x\right) ,
\end{eqnarray*}%
On interchanging the integration and summation, we obtain 
\begin{equation*}
\emph{I}_{1}=\left( -1\right) ^{n}n!\sum_{k=0}^{\infty }
\frac{\Gamma(p-n)}{k!\Gamma(p-n-k)}\frac{\Gamma(q+n+1)(-1)^{k}}{\Gamma(n-k+1)\Gamma(q+k+1)}\left( I_{0+}^{\delta ,\delta ^{\prime },\mu ,\mu ^{\prime },\epsilon
}(t^{\tau+k-1})\right)(x)
\end{equation*}%
Now, for any $k=0,1,2,...$.Since  
\begin{equation*}
\Re \left( \tau +k\right) \geq \Re \left( \tau \right) >\max \left[ 0,\Re
\left( \delta -\delta ^{^{\prime }}-\mu -\epsilon \right) ,\Re \left(
\delta ^{^{\prime }}-\mu ^{^{\prime }}\right) \right] ,
\end{equation*}%
and applying Lemma \ref{lem-1} and after some calculations, we get%
\begin{eqnarray*}
\emph{I}_{1} &=&\sum_{k=0}^{\infty }\frac{\left( -1\right) ^{n}\Gamma \left( q+n+1\right) (n-p)_{k}(-n)_{k}(-1)^{k}}{k!%
\Gamma \left(q+1+k\right)} \\
&&\times \frac{\Gamma \left( \tau +k\right) \Gamma \left( \tau +k+\delta
-\delta ^{^{\prime }}-\mu \right) \Gamma \left( \tau +k+\mu ^{\prime
}-\delta ^{^{\prime }}\right) }{\Gamma \left( \tau +k+\mu ^{^{\prime
}}\right) \Gamma \left( \tau +k+\epsilon -\delta -\delta ^{^{\prime
}}\right) \Gamma \left( \tau +k+\epsilon -\delta ^{\prime }-\mu \right) } \\
&&\times x^{\tau +k-\delta -\delta ^{^{\prime }}+\epsilon -1}.
\end{eqnarray*}%
In view of \eqref{eqn-1-hyper} and using the well known relation 
\begin{equation}\label{GR}
\Gamma(x+k)=(x)_{k}\Gamma(x),
\end{equation} 
we arrive desired result.
\end{proof}
If we set $\delta', \mu'=0, \delta=\delta+\mu, \mu=-\epsilon$ and $\epsilon=\delta$ in theorem \ref{Th1}, then we get the following result,
\begin{corollary}\label{Cor1} 
Let $\delta, \mu, \epsilon \in \mathbb{C}$ such that $\Re(\delta)>0, \Re(\tau)>max[0, \Re(\mu-\epsilon)]$, then
\begin{eqnarray*}
\left( I_{0+}^{\delta ,\mu ,\epsilon }t^{\tau -1}~M_{n}^{\left( p,q\right)
}\left( t\right) \right) (x)
=\frac{\left( -1\right) ^{n}\Gamma \left( \tau \right) \Gamma \left( \tau
+\epsilon -\mu \right) \Gamma \left( q+n+1\right) }{\Gamma \left( 1+q\right)
\Gamma \left( \tau -\mu \right) \Gamma \left( \tau +\delta +\epsilon \right) }%
x^{\tau -\mu -1} \\
\times _{4}F_{3}\left[ 
\begin{array}{c}
1+n-p,-n,\tau ,\tau +\epsilon -\mu,  \\ 
1+q,\tau-\mu ,\tau+\epsilon +\delta, 
\end{array}%
\left\vert 
\begin{array}{c}
-x%
\end{array}%
\right. \right] .
\end{eqnarray*}
which is theorem 2.1 of \cite{Malik}.
\end{corollary}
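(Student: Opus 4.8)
The plan is to re-run the argument of Theorem~\ref{Th1} essentially verbatim, simply replacing the five-parameter MSM kernel by the three-parameter Saigo kernel, so that Lemma~\ref{lem-3} takes over the role played there by Lemma~\ref{lem-1}. Equivalently, one may read the corollary as the specialization of Theorem~\ref{Th1} under $\delta'\to 0$, $\mu'\to 0$, $\delta\to\delta+\mu$, $\mu\to-\epsilon$, $\epsilon\to\delta$, for which the operator $I_{0+}^{\delta,\delta',\mu,\mu',\epsilon}$ degenerates to $I_{0+}^{\delta,\mu,\epsilon}$ and Lemma~\ref{lem-1} collapses to Lemma~\ref{lem-3}. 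I would actually carry out the direct route, since it avoids any worry about whether the five-to-three parameter reduction has been set up consistently and yields the stated closed form unconditionally.

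Concretely, first I would insert the finite expansion \eqref{Eq2} of $M_n^{(p,q)}(t)$, interchange the (finite) sum with the integral, and reduce the computation to the single-power images $\left(I_{0+}^{\delta,\mu,\epsilon} t^{\tau+k-1}\right)(x)$. The hypothesis $\Re(\tau)>\max[0,\Re(\mu-\epsilon)]$ guarantees $\Re(\tau+k)>\max[0,\Re(\mu-\epsilon)]$ for every $k\ge 0$, so Lemma~\ref{lem-3} applies term by term and contributes
\[
\frac{\Gamma(\tau+k)\,\Gamma(\tau+k+\epsilon-\mu)}{\Gamma(\tau+k-\mu)\,\Gamma(\tau+k+\epsilon+\delta)}\,x^{\tau+k-\mu-1}.
\]
Pulling out the $k$-independent gamma quotient and the power $x^{\tau-\mu-1}$, and rewriting each $\Gamma(\,\cdot+k)$ by means of \eqref{GR}, leaves the numerator Pochhammer symbols $(-n)_k$, $(1+n-p)_k$, $(\tau)_k$, $(\tau+\epsilon-\mu)_k$ against the denominator symbols $(1+q)_k$, $(\tau-\mu)_k$, $(\tau+\epsilon+\delta)_k$ together with a $k!$; comparison with \eqref{eqn-1-hyper} identifies the series as the $_4F_3$ in the statement evaluated at $-x$, while the extracted gamma quotient supplies exactly the prefactor displayed in the corollary.

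The only place demanding care is reconciling this output with the substitution view of Theorem~\ref{Th1}. Setting $\delta'=\mu'=0$ makes the upper entry $\tau+\mu'-\delta'$ and the lower entry $\tau+\mu'$ of the $_5F_4$ both equal to $\tau$, and these must cancel in order to drop the order from $_5F_4$ to $_4F_3$; the same $\delta'=0$ choice cancels the paired factor $\Gamma(\tau+\mu')$ in the prefactor. I would therefore verify that, after these cancellations, the three surviving numerator and denominator gamma pairs and the three surviving hypergeometric parameters coincide with those furnished directly by Lemma~\ref{lem-3} — that is, that the reduced MSM image agrees term by term with the Saigo image. This parameter bookkeeping is the real content of the reduction and the one step I expect to be delicate; once it is checked, the $_4F_3$, its gamma prefactor, and the power $x^{\tau-\mu-1}$ all read off at once.
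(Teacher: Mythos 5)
Your proposal is correct, but it is not the paper's route: the paper obtains Corollary \ref{Cor1} in one line, by substituting $\delta'=\mu'=0$, $\delta\to\delta+\mu$, $\mu\to-\epsilon$, $\epsilon\to\delta$ into the already-proven Theorem \ref{Th1} (the MSM-to-Saigo reduction), whereas you re-derive the corollary from scratch: expand $M_n^{(p,q)}$ by \eqref{Eq2}, interchange the finite sum with the operator, apply Lemma \ref{lem-3} term by term (justified since $\Re(\tau+k)\geq\Re(\tau)>\max[0,\Re(\mu-\epsilon)]$ for all $k\geq 0$), and resum via \eqref{GR} and \eqref{eqn-1-hyper}. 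Your Pochhammer bookkeeping — numerator $(1+n-p)_k(-n)_k(\tau)_k(\tau+\epsilon-\mu)_k$ against denominator $k!\,(1+q)_k(\tau-\mu)_k(\tau+\epsilon+\delta)_k$ with argument $-x$ — is exactly right and reproduces the stated ${}_4F_3$ and gamma prefactor. Your instinct that the substitution route is the delicate part is well founded: the printed statement of Theorem \ref{Th1} contains a typo, its numerator factor $\Gamma(\tau+\delta-\delta'-\mu)$ and the matching ${}_5F_4$ parameter should read $\Gamma(\tau+\epsilon-\delta-\delta'-\mu)$, as Lemma \ref{lem-1} and the theorem's own proof dictate; substituting into the statement as printed would yield $\Gamma(\tau+\delta+\mu+\epsilon)$ instead of the correct $\Gamma(\tau+\epsilon-\mu)$. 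With the corrected theorem the substitution does go through exactly as you describe: the upper entry $\tau+\mu'-\delta'$ and the lower entry $\tau+\mu'$ both collapse to $\tau$ and cancel (likewise the paired gamma factors), dropping ${}_5F_4$ to ${}_4F_3$. In short, the paper's route buys brevity but inherits any defect in Theorem \ref{Th1}'s printed statement; your direct route costs repeating the computation but certifies the corollary unconditionally and, as a byproduct, confirms the typo.
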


Substituting $\mu=-\delta$ in corollary \ref{Cor1} and using \eqref{Eq2}, we get ,
\begin{corollary}\label{Cor2} 
Let $\delta, \tau \in \mathbb{C}$ such that $\Re(\delta)>0, \Re(\tau)>0$, then
\begin{eqnarray*}
\left( I_{0+}^{\delta}t^{\tau-1}~M_{n}^{\left( p,q\right)
}\left( t\right) \right) (x)
=\frac{\left( -1\right) ^{n}\Gamma(1+q+n)\Gamma \left( \tau \right)}{\Gamma \left( 1+q\right)
\Gamma \left( \tau +\delta\right) }%
x^{\tau+\delta-1} \\
\times _{3}F_{2}\left[ 
\begin{array}{c}
1+n-p,-n,\tau, \\ 
1+q,\tau+\delta, 
\end{array}%
\left\vert 
\begin{array}{c}
-x%
\end{array}%
\right. \right] 
\end{eqnarray*}
\end{corollary}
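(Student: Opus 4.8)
The plan is to obtain this corollary as a direct specialization of Corollary \ref{Cor1} rather than repeating the full computation of Theorem \ref{Th1}. The key observation is that the three-parameter Saigo operator $I_{0+}^{\delta,\mu,\epsilon}$ collapses to the one-parameter Riemann--Liouville operator $I_{0+}^{\delta}$ precisely when $\mu=-\delta$; this is already visible at the level of power functions by comparing \eqref{lem3-eqn1} with its specialization \eqref{lem3-eqn2}. Indeed, setting $\mu=-\delta$ in \eqref{lem3-eqn1} turns the factor $\Gamma(\tau+\epsilon-\mu)$ into $\Gamma(\tau+\epsilon+\delta)$, which cancels the $\Gamma(\tau+\epsilon+\delta)$ in the denominator, leaving exactly the kernel of \eqref{lem3-eqn2}, with $\epsilon$ dropping out altogether.

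First I would substitute $\mu=-\delta$ into the closed form of Corollary \ref{Cor1}. In the Gamma prefactor this sends $\Gamma(\tau+\epsilon-\mu)\mapsto\Gamma(\tau+\epsilon+\delta)$ and $\Gamma(\tau-\mu)\mapsto\Gamma(\tau+\delta)$, and the resulting $\Gamma(\tau+\epsilon+\delta)$ in the numerator cancels the $\Gamma(\tau+\delta+\epsilon)$ in the denominator; meanwhile the power $x^{\tau-\mu-1}$ becomes $x^{\tau+\delta-1}$. This leaves the prefactor $\frac{(-1)^{n}\Gamma(1+q+n)\Gamma(\tau)}{\Gamma(1+q)\Gamma(\tau+\delta)}$, exactly as claimed.

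Next I would treat the hypergeometric factor. Under $\mu=-\delta$ the upper parameter $\tau+\epsilon-\mu$ of the ${}_4F_3$ becomes $\tau+\epsilon+\delta$, which now coincides with the lower parameter $\tau+\epsilon+\delta$. Since a coincident numerator and denominator parameter cancel term-by-term in the defining series \eqref{eqn-1-hyper}, the ${}_4F_3$ reduces to the ${}_3F_2$ with upper parameters $1+n-p,\,-n,\,\tau$ and lower parameters $1+q,\,\tau+\delta$, which is the stated result.

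The only genuine care needed is with the hypotheses: I would verify that the parameter conditions of Corollary \ref{Cor1}, namely $\Re(\delta)>0$ and $\Re(\tau)>\max[0,\Re(\mu-\epsilon)]$, are inherited. With $\mu=-\delta$ these read $\Re(\tau)>\max[0,\Re(-\delta-\epsilon)]$, and since $\Re(\delta)>0$ together with the standing assumption $\Re(\epsilon)>0$ forces the right-hand maximum to be $0$, the single condition $\Re(\tau)>0$ of the present statement suffices. This bookkeeping of parameters is the main (and only mild) obstacle—everything else is algebraic cancellation. As an alternative route one could reprove the corollary from scratch in the style of Theorem \ref{Th1}, expanding $M_{n}^{(p,q)}$ by \eqref{Eq2}, interchanging sum and integral, and applying the power rule \eqref{lem3-eqn2} termwise; this yields the same ${}_3F_2$ directly without passing through the intermediate ${}_4F_3$.
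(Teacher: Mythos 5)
Your proposal is correct and follows exactly the paper's own route: the paper derives Corollary \ref{Cor2} precisely by substituting $\mu=-\delta$ into Corollary \ref{Cor1}, which collapses the Saigo operator to the Riemann--Liouville operator $I_{0+}^{\delta}$, cancels $\Gamma(\tau+\epsilon+\delta)$ against the denominator, and reduces the ${}_4F_3$ to the stated ${}_3F_2$ through the coincident parameter $\tau+\epsilon+\delta$. Your write-up simply fills in the cancellation and hypothesis bookkeeping that the paper leaves implicit in its one-line derivation.
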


Substituting $\mu=0$ in corollary \ref{Cor1}, we have,
\begin{corollary}\label{Cor3} 
Let $\delta, \epsilon, \tau \in \mathbb{C}$ such that $\Re(\delta)>0, \Re(\tau)>\Re(\epsilon)$, then
\begin{eqnarray*}
\left( I_{\epsilon, \delta}^{+}t^{\tau-1}~M_{n}^{\left( p,q\right)
}\left( t\right) \right) (x)
=\frac{\left( -1\right) ^{n}\Gamma(1+q+n)\Gamma \left( \tau+\epsilon \right)}{\Gamma \left( 1+q\right)
\Gamma \left( \tau +\delta+\epsilon\right) }%
x^{\tau-1} \\
\times _{3}F_{2}\left[ 
\begin{array}{c}
1+n-p,-n,\tau+\epsilon,  \\ 
1+q,\tau+\delta+\epsilon, 
\end{array}%
\left\vert 
\begin{array}{c}
-x%
\end{array}%
\right. \right] 
\end{eqnarray*}
\end{corollary}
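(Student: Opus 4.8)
The plan is to obtain this corollary as a direct specialization of Corollary \ref{Cor1} at $\mu=0$, exactly as the paper indicates. First I would record that when the middle parameter of the three-parameter Saigo operator is set to zero, the operator collapses to the Erd\'elyi--Kober type operator, so that $I_{0+}^{\delta,0,\epsilon}=I_{\epsilon,\delta}^{+}$, and the hypotheses of Corollary \ref{Cor1} specialize accordingly. With this identification the left-hand side of the asserted formula is literally the $\mu=0$ instance of the left-hand side of Corollary \ref{Cor1}, so nothing new has to be integrated.

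The key algebraic step is then to push $\mu=0$ through the right-hand side and observe a simultaneous double cancellation. In the Gamma prefactor the numerator factor $\Gamma(\tau)$ and the denominator factor $\Gamma(\tau-\mu)$ coincide and cancel, leaving $\frac{(-1)^n\,\Gamma(q+n+1)\,\Gamma(\tau+\epsilon)}{\Gamma(1+q)\,\Gamma(\tau+\delta+\epsilon)}\,x^{\tau-1}$, which already matches the claimed prefactor and the exponent $x^{\tau-1}$. At the same time, in the $_4F_3$ the upper parameter $\tau$ and the lower parameter $\tau-\mu$ become equal, so the corresponding Pochhammer symbols cancel term-by-term and the series loses one upper and one lower parameter, collapsing the $_4F_3$ to the $_3F_2$ with numerator parameters $1+n-p,\,-n,\,\tau+\epsilon$ and denominator parameters $1+q,\,\tau+\delta+\epsilon$. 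Assembling the two reductions yields precisely the stated identity.

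As an independent check I would also reprove the statement from scratch along the lines of Theorem \ref{Th1}: expand $M_n^{(p,q)}$ by \eqref{Eq2}, interchange the (terminating) sum with $I_{\epsilon,\delta}^{+}$, apply the Erd\'elyi--Kober image formula \eqref{lem3-eqn3} to each power $t^{\tau+k-1}$, and reassemble the series using \eqref{GR}. The point that needs the most care is reconciling the exponent and the admissible range of $\tau$ across the two derivations: the specialization of Corollary \ref{Cor1} fixes the power as $x^{\tau-1}$ and dictates a convergence strip, and one should confirm that \eqref{lem3-eqn3} is being invoked in exactly that normalization, with the range $\Re(\tau)>\Re(\epsilon)$ matched rather than a shifted variant. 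Once this bookkeeping is pinned down, no further estimate is required and the result follows immediately.
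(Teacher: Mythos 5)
Your proposal is correct and follows essentially the same route as the paper, which obtains Corollary \ref{Cor3} simply by setting $\mu=0$ in Corollary \ref{Cor1}, identifying $I_{0+}^{\delta,0,\epsilon}$ with the Erd\'elyi--Kober operator $I_{\epsilon,\delta}^{+}$, and cancelling the pair $\Gamma(\tau)/\Gamma(\tau-\mu)$ in the prefactor together with the matching parameters $\tau$ and $\tau-\mu$ in the ${}_4F_3$. Your closing caution about the normalization of \eqref{lem3-eqn3} is well placed, since the paper's own statement of that formula carries the exponent $x^{\tau+\delta-1}$ while the $\mu=0$ specialization (and Corollary \ref{Cor3} itself) gives $x^{\tau-1}$, a discrepancy internal to the paper rather than to your argument.
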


\begin{theorem}
\label{Th2} Let $\delta ,\delta ^{\prime },\mu ,\mu ^{\prime },\epsilon
,\tau\in \mathbb{C}$,   and 
\begin{equation*}
\Re(\epsilon)>0, \Re {(\tau )}>\max \{\Re(\mu), \Re {(-\delta -\delta ^{\prime }+\epsilon^{\prime } )},\Re {%
(-\delta-\mu ^{\prime }+\epsilon )},
\end{equation*}%
then 
\begin{eqnarray*}
&&\left( I_{-}^{\delta ,\delta ^{\prime },\mu ,\mu ^{\prime },\epsilon
}t^{-\tau }~M_{n}^{\left( p,q\right) }\left( \frac{1}{t}\right) \right) (x) \\
&=&x^{-\delta -\delta ^{^{\prime }}+\epsilon -\tau}\frac{\left( -1\right)
^{n}\Gamma \left( q+n+1\right)\Gamma(\tau-\mu)\Gamma(\delta+\delta^{\prime}-\epsilon+\tau)\Gamma(\delta+\mu^{\prime}-\epsilon+\tau) }{\Gamma \left( \tau\right) \Gamma \left(q+1\right)\Gamma(\delta-\mu+\tau)\Gamma(\delta+\delta^{\prime}+\mu^{\prime}-\epsilon+\tau) } \\
&&\times _{5}\Psi _{4}\left[ 
\begin{array}{c}
1+n-p, -n, -\mu+\tau, \delta+\delta^{\prime}-\epsilon+\tau, \delta+\mu^{\prime}-\epsilon+\tau , \\ 
1+q, \tau, \delta-\mu+\tau, \delta+\delta^{\prime}+\mu^{\prime}-\epsilon+\tau ,%
\end{array}%
\Big|-\frac{1}{x}\right] .
\end{eqnarray*}
\end{theorem}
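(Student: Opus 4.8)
The plan is to follow the template of the proof of Theorem \ref{Th1}, merely replacing the left-sided operator $I_{0+}$ by the right-sided operator $I_{-}$ and invoking Lemma \ref{lem-2} in place of Lemma \ref{lem-1}. Denoting the left-hand side by $I_2$, I would first insert the explicit expansion \eqref{Eq2} of $M_{n}^{(p,q)}$ evaluated at $1/t$, so that
\begin{equation*}
I_2 = \left( I_{-}^{\delta,\delta',\mu,\mu',\epsilon}\, t^{-\tau}(-1)^n n!\sum_{k=0}^{n}\binom{p-n-1}{k}\binom{q+n}{n-k}\left(-\tfrac{1}{t}\right)^k\right)(x).
\end{equation*}
Since $(-1/t)^k\,t^{-\tau} = (-1)^k\,t^{-(\tau+k)}$, each summand is a constant multiple of the power $t^{-(\tau+k)}$, which is exactly the class of functions on which Lemma \ref{lem-2} operates.

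Because the sum is finite, I would interchange the operator and the summation by linearity, with no convergence question arising, leaving a finite combination of the images $\bigl(I_{-}^{\delta,\delta',\mu,\mu',\epsilon}\, t^{-(\tau+k)}\bigr)(x)$. To apply Lemma \ref{lem-2} with $\tau$ replaced by $\tau+k$, I must check that its hypotheses persist for every $k=0,1,\dots,n$; this is immediate, since $\Re(\tau+k)\ge \Re(\tau)>\max\{\Re(\mu),\Re(-\delta-\delta'+\epsilon),\Re(-\delta-\mu'+\epsilon)\}$. Lemma \ref{lem-2} then yields, for each $k$, the gamma quotient of \eqref{Lem2-Eq2} with every argument shifted by $k$, multiplied by $x^{-\delta-\delta'+\epsilon-\tau-k}$.

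The remaining step is purely a normalization. Using the Pochhammer relation \eqref{GR}, I would rewrite each shifted factor $\Gamma(a+k)$ as $(a)_k\Gamma(a)$ and recast the two binomial coefficients as Pochhammer symbols, so that $\binom{p-n-1}{k}$ contributes $(1+n-p)_k$ while $\binom{q+n}{n-k}$ contributes $(-n)_k$ together with the ratio $\Gamma(q+n+1)/[\Gamma(q+1)(1+q)_k]$. Pulling the $k=0$ values out as the $x$-dependent prefactor and collecting the five numerator symbols $(1+n-p)_k,(-n)_k,(-\mu+\tau)_k,(\delta+\delta'-\epsilon+\tau)_k,(\delta+\mu'-\epsilon+\tau)_k$ against the four denominator symbols $(1+q)_k,(\tau)_k,(\delta-\mu+\tau)_k,(\delta+\delta'+\mu'-\epsilon+\tau)_k$, together with $(-1)^k x^{-k}/k!=(-1/x)^k/k!$, identifies the surviving series with the generalized hypergeometric function in the statement through the definition \eqref{eqn-1-hyper}.

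The integral evaluations are the routine part, entirely handled by Lemma \ref{lem-2}; the one place that demands care is the sign and Pochhammer bookkeeping in this last step. In particular one must track the three separate factors of $(-1)^k$ — one from $(-1/t)^k$ and one from each binomial-to-Pochhammer conversion — whose product is again $(-1)^k$, and confirm that it combines with the surviving power $x^{-k}$ to produce precisely the argument $-1/x$. This is the main obstacle, since a slip in any of these signs would corrupt the argument of the resulting hypergeometric function.
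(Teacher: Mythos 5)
Your proposal is correct and follows essentially the same route as the paper's own proof: expand $M_n^{(p,q)}(1/t)$ via \eqref{Eq2}, interchange the operator with the (finite) sum, apply Lemma \ref{lem-2} termwise with $\tau\mapsto\tau+k$ after verifying its hypotheses persist, and then use \eqref{GR} together with the binomial-to-Pochhammer conversions to identify the series via \eqref{eqn-1-hyper}. Your sign bookkeeping (three factors of $(-1)^k$ combining to $(-1)^k$, hence the argument $-1/x$) is exactly right, and is in fact handled more carefully than in the paper's compressed presentation.
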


\begin{proof}\label{Pf2}  
 Applying the definition of $M_{n}^{\left( p,q\right) }\left(
x\right) $ in \ref{Eq2} and denote the left hand side by \emph{I}$_{2}$, we
get 
\begin{eqnarray*}
\emph{I}_{2} &=&\left( I_{-}^{\delta ,\delta ^{\prime },\mu ,\mu ^{\prime },\epsilon
}t^{-\tau }~M_{n}^{\left( p,q\right) }\left( \frac{1}{t}\right) \right) (x)
\\
&=&\left( I_{-}^{\delta ,\delta ^{\prime },\mu ,\mu ^{\prime },\epsilon
}t^{-\tau }\left( -1\right) ^{n}n!\sum\limits_{k=0}^{\infty }\left( 
\begin{array}{c}
p-n-1 \\ 
k%
\end{array}%
\right) 
\left( 
\begin{array}{c}
q+n \\ 
n-k%
\end{array}%
\right) \text{ }\left( -\frac{1}{t}\right) ^{k}\right) \left( x\right) ,
\end{eqnarray*}%
On interchanging the integration and summation, we obtain 
\begin{equation*}
\emph{I}_{2}=\left( -1\right) ^{n}n!\sum_{k=0}^{\infty }
\frac{\Gamma(p-n)}{k!\Gamma(p-n-k)}\frac{\Gamma(q+n+1)(-1)^{k}}{\Gamma(n-k+1)\Gamma(q+k+1)}\left( I_{-}^{\delta ,\delta ^{\prime },\mu ,\mu ^{\prime },\epsilon
}(t^{-\tau-k})\right)(x)
\end{equation*}%
Now, for any $k=0,1,2,...$.Since  
\begin{equation*}
\Re \left( \tau+k \right) >\max \left[ \Re(\mu),\Re
\left(-\delta -\delta ^{^{\prime }}-\epsilon'\right) ,\Re \left(
-\delta-\mu^{\prime }+\epsilon\right) \right] ,
\end{equation*}%
and applying Lemma \ref{lem-2} and some simple calculations, we get%
\begin{eqnarray*}
\emph{I}_{2} &=&\sum_{k=0}^{\infty }\frac{\left( -1\right) ^{n}\Gamma \left( q+n+1\right) (n-p)_{k}(-n)_{k}(-1)^{k}}{k!%
\Gamma \left(q+1+k\right)} \\
&&\times \frac{\Gamma \left( -\mu+\tau +k\right) \Gamma \left( \tau+\delta
+\delta ^{^{\prime }}-\epsilon+k \right) \Gamma \left( \tau+\mu ^{\prime
}+\delta-\epsilon+k\right) }{\Gamma \left( \tau +k\right) \Gamma \left( \tau +k+\delta-\mu\right) \Gamma \left( \delta+\delta^{\prime}+\mu^{\prime}-\epsilon+\tau+k \right) } \\
&&\times x^{-\tau-k-\delta -\delta ^{^{\prime }}+\epsilon}.
\end{eqnarray*}%
In view of \eqref{eqn-1-hyper} and \eqref{GR}, we reach the required result.
\end{proof}

If we set $\delta', \mu'=0, \delta=\delta+\mu, \mu=-\epsilon$ and $\epsilon=\delta$ in theorem \ref{Th2}, then we get the following result,
\begin{corollary}\label{Cor4} 
Let $\delta, \mu, \epsilon \in \mathbb{C}$ such that $\Re(\delta)>0, \Re(\tau)>1+min[\Re(\mu), \Re(\epsilon)]$, then
\begin{eqnarray}
\left( I_{-}^{\delta ,\mu ,\epsilon }t^{\tau -1}~M_{n}^{\left( p,q\right)
}\left( \frac{1}{t}\right) \right) (x)
=\frac{\Gamma(1+q+n)\Gamma(\mu-\tau+1)\Gamma(\epsilon-\tau+1)}{\Gamma(1+q)\Gamma(1-\tau)\Gamma(\delta+\mu+\epsilon-\tau+1)}x^{\tau-\mu-1} \notag\\
\times _{4}F_{3}\left[ 
\begin{array}{c}
1+n-p,-n,\mu-\tau+1, \epsilon-\tau+1,  \\ 
1+q,1-\tau ,\delta+\mu+\epsilon-\tau+1,
\end{array}%
\left\vert 
\begin{array}{c}
-\frac{1}{x}%
\end{array}%
\right. \right] 
\end{eqnarray}
\end{corollary}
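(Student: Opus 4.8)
The plan is to obtain this corollary as a specialization of Theorem \ref{Th2}, exactly as Corollary \ref{Cor1} was deduced from Theorem \ref{Th1}. First I would recall that the five--parameter Marichev--Saigo--Maeda operator $I_{-}^{\delta,\delta^{\prime},\mu,\mu^{\prime},\epsilon}$ collapses to the three--parameter Saigo operator $I_{-}^{\delta,\mu,\epsilon}$ under the assignment $\delta^{\prime}=\mu^{\prime}=0$, $\delta\mapsto\delta+\mu$, $\mu\mapsto-\epsilon$, $\epsilon\mapsto\delta$; this is the same reduction that already carries Lemma \ref{lem-2} into Lemma \ref{lem-4}. Substituting these values into the conclusion of Theorem \ref{Th2} is therefore the natural route, and the only bookkeeping point is that Theorem \ref{Th2} is stated for the power $t^{-\tau}$ whereas the corollary is stated for $t^{\tau-1}$, so I would simultaneously make the replacement $\tau\mapsto 1-\tau$ to align the two left--hand sides.

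Next I would carry out the simplification of the Gamma--factor prefactor. After setting $\delta^{\prime}=\mu^{\prime}=0$, the two arguments $\delta+\delta^{\prime}-\epsilon+\tau$ and $\delta+\mu^{\prime}-\epsilon+\tau$ both collapse to $\delta-\epsilon+\tau$, and this same value reappears in the denominator through $\delta+\delta^{\prime}+\mu^{\prime}-\epsilon+\tau$; hence one numerator factor $\Gamma(\delta-\epsilon+\tau)$ and the matching denominator factor cancel, leaving the quotient $\Gamma(\tau-\mu)\Gamma(\delta-\epsilon+\tau)/\bigl(\Gamma(\tau)\Gamma(\delta-\mu+\tau)\bigr)$ times $(-1)^{n}\Gamma(q+n+1)/\Gamma(q+1)$. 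Translating the surviving arguments through $\delta\mapsto\delta+\mu$, $\mu\mapsto-\epsilon$, $\epsilon\mapsto\delta$, $\tau\mapsto1-\tau$ turns $\Gamma(\tau-\mu)$ into $\Gamma(\epsilon-\tau+1)$, $\Gamma(\delta-\epsilon+\tau)$ into $\Gamma(\mu-\tau+1)$, $\Gamma(\tau)$ into $\Gamma(1-\tau)$, and $\Gamma(\delta-\mu+\tau)$ into $\Gamma(\delta+\mu+\epsilon-\tau+1)$, which reproduces the stated prefactor, while the power of $x$ transforms from $x^{-\delta-\delta^{\prime}+\epsilon-\tau}$ to $x^{\tau-\mu-1}$ under the same substitution.

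The key structural step is the reduction of the hypergeometric order. Under $\delta^{\prime}=\mu^{\prime}=0$ the two upper parameters $\delta+\delta^{\prime}-\epsilon+\tau$ and $\delta+\mu^{\prime}-\epsilon+\tau$ of the $_{5}F_{4}$ both become $\delta-\epsilon+\tau$, which coincides with the lower parameter $\delta+\delta^{\prime}+\mu^{\prime}-\epsilon+\tau$; a matching upper and lower parameter cancels and lowers the series to a $_{4}F_{3}$. Applying the same parameter dictionary to the remaining upper entries $-\mu+\tau$ and $\delta-\epsilon+\tau$ and to the lower entries $\tau$ and $\delta-\mu+\tau$ yields precisely the array $\{1+n-p,-n,\mu-\tau+1,\epsilon-\tau+1\,;\,1+q,1-\tau,\delta+\mu+\epsilon-\tau+1\}$ displayed in the corollary, with argument $-1/x$ unchanged.

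I expect the main obstacle to be purely one of consistent relabelling: keeping the reused symbols $\delta,\mu,\epsilon$ distinct before and after the substitution, and correctly propagating the exchange $\tau\mapsto1-\tau$ through every Gamma argument and every hypergeometric parameter, so that the convergence hypothesis of Theorem \ref{Th2} transforms into the one--sided bound $\Re(\tau)<1+\min\{\Re(\mu),\Re(\epsilon)\}$ governing Lemma \ref{lem-4}. As an independent check I would, if needed, rederive the identity from scratch by expanding $M_{n}^{(p,q)}(1/t)$ via \eqref{Eq2}, interchanging the finite sum with $I_{-}^{\delta,\mu,\epsilon}$, applying Lemma \ref{lem-4} to each monomial $t^{\tau-k-1}$, and recombining through $\Gamma(x+k)=(x)_{k}\Gamma(x)$; this termwise route produces the same $_{4}F_{3}$ with no parameter juggling and makes the admissible range of $\tau$ transparent.
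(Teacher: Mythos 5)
Your proposal is correct and is essentially the paper's own proof: the paper obtains Corollary \ref{Cor4} precisely by the one-line specialization $\delta'=\mu'=0$, $\delta\mapsto\delta+\mu$, $\mu\mapsto-\epsilon$, $\epsilon\mapsto\delta$ of Theorem \ref{Th2}, exactly the route you take. Your extra bookkeeping (the replacement $\tau\mapsto 1-\tau$ to reconcile $t^{-\tau}$ with $t^{\tau-1}$) is in fact more careful than the paper's, and it correctly exposes two typos in the stated corollary: the prefactor should carry the factor $(-1)^{n}$ inherited from Theorem \ref{Th2}, and the hypothesis should read $\Re(\tau)<1+\min[\Re(\mu),\Re(\epsilon)]$ rather than $\Re(\tau)>1+\min[\Re(\mu),\Re(\epsilon)]$.
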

which is theorem 2.2 of \cite{Malik}.

Substituting $\mu=-\delta$ in corollary \ref{Cor4} and using \eqref{Eq2}, we get,
\begin{corollary}\label{Cor5} 
Let $\delta, \epsilon, \tau \in \mathbb{C}$ such that $\Re(\delta)>0, \Re(\tau)<1+min[\Re(-\delta), \Re(\epsilon)]$ and $\tau+\delta \neq 1,2,...$, then
\begin{eqnarray*}
\left( I_{-}^{\delta}t^{\tau-1}~M_{n}^{\left( p,q\right)
}\left( \frac{1}{t}\right) \right) (x)
=\frac{\left( -1\right) ^{n}\Gamma(1+q+n)\Gamma \left(-\delta- \tau+1 \right)}{\Gamma \left( 1+q\right)
\Gamma \left(1-\tau\right) }%
x^{\tau+\delta-1} \\
\times _{3}F_{2}\left[ 
\begin{array}{c}
1+n-p,-n,-\delta-\tau+1,  \\ 
1+q,1-\tau, 
\end{array}%
\left\vert 
\begin{array}{c}
-\frac{1}{x}%
\end{array}%
\right. \right] 
\end{eqnarray*}
\end{corollary}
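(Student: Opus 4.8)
The plan is to obtain this statement as the $\mu=-\delta$ specialization of Corollary \ref{Cor4}, so that the entire computation reduces to tracking how that single substitution collapses the parameters. The first and most substantive step is to identify the operator on the left-hand side: setting $\mu=-\delta$ in the three-parameter right-sided operator $I_{-}^{\delta,\mu,\epsilon}$, one checks against the particular case \eqref{lem4-eqn2} of Lemma \ref{lem-4} that $I_{-}^{\delta,-\delta,\epsilon}$ acts on the power monomials $t^{s-1}$ exactly as the one-parameter operator $I_{-}^{\delta}$ does, the $\epsilon$-dependence dropping out entirely. Hence $I_{-}^{\delta,-\delta,\epsilon}$ may be replaced by $I_{-}^{\delta}$ throughout, which is what matches the operator appearing in the present statement.

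It then remains to simplify the explicit right-hand side of Corollary \ref{Cor4} under $\mu=-\delta$. In the prefactor the lower gamma $\Gamma(\delta+\mu+\epsilon-\tau+1)$ becomes $\Gamma(\epsilon-\tau+1)$ and cancels the factor $\Gamma(\epsilon-\tau+1)$ in the numerator, leaving $\frac{\Gamma(1+q+n)\,\Gamma(1-\delta-\tau)}{\Gamma(1+q)\,\Gamma(1-\tau)}\,x^{\tau+\delta-1}$, carrying the sign $(-1)^{n}$ inherited from the definition \eqref{Eq2} of $M_{n}^{(p,q)}$. In the ${}_{4}F_{3}$ the upper parameter $\epsilon-\tau+1$ now coincides with the lower parameter $\delta+\mu+\epsilon-\tau+1=\epsilon-\tau+1$; by the standard reduction of a ${}_{p}F_{q}$ in which an upper and a lower parameter agree, this pair cancels and the series collapses to the ${}_{3}F_{2}$ displayed in the statement, with the argument $-1/x$ unchanged.

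A perhaps more transparent route, which I would use to verify the signs cleanly, is to prove the identity from scratch: expand $M_{n}^{(p,q)}(1/t)$ by \eqref{Eq2}, so that $t^{\tau-1}M_{n}^{(p,q)}(1/t)$ is a \emph{finite} combination of monomials $t^{\tau-k-1}$ for $k=0,\dots,n$ (the factor $\binom{q+n}{n-k}$ vanishes for $k>n$), apply $I_{-}^{\delta}$ termwise via \eqref{lem4-eqn2} with $s=\tau-k$, and reassemble. The only genuine labor is the routine conversion of the two binomial coefficients and of the ratio $\Gamma(1-\delta-\tau+k)/\Gamma(1-\tau+k)$ into Pochhammer symbols through \eqref{GR}; there the three factors $(-1)^{k}$ (one from $(-1/t)^{k}$ and one from each binomial) combine to a single $(-1)^{k}$ that sits inside $(-1/x)^{k}$, while the leading $n!$ of \eqref{Eq2} cancels against the $1/n!$ produced by $1/\Gamma(n-k+1)$, leaving exactly one overall $(-1)^{n}$. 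I expect the main obstacle to be purely this bookkeeping—keeping the parameter shift $t^{-\tau}\mapsto t^{\tau-1}$ between Theorem \ref{Th2}/Corollary \ref{Cor4} and the present statement consistent, and ensuring the sign factors leave precisely one $(-1)^{n}$ in the prefactor. No convergence analysis is needed since every sum is finite, and the hypotheses $\Re(\tau)<1+\min[\Re(-\delta),\Re(\epsilon)]$ and $\tau+\delta\neq 1,2,\dots$ are inherited directly from Corollary \ref{Cor4} and Lemma \ref{lem-4}.
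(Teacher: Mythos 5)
Your proposal is correct and follows essentially the same route as the paper, which obtains Corollary \ref{Cor5} precisely by setting $\mu=-\delta$ in Corollary \ref{Cor4}: the operator $I_{-}^{\delta,-\delta,\epsilon}$ collapses to $I_{-}^{\delta}$ via \eqref{lem4-eqn2}, the gamma factor $\Gamma(\epsilon-\tau+1)$ cancels, and the coinciding upper/lower parameter $\epsilon-\tau+1$ reduces the ${}_{4}F_{3}$ to the stated ${}_{3}F_{2}$. Your remark that the factor $(-1)^{n}$ must be carried from the definition \eqref{Eq2} (it is present in Theorem \ref{Th2} but was dropped in the statement of Corollary \ref{Cor4}) correctly repairs the one inconsistency in the paper's chain of specializations.
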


Substituting $\mu=0$ in corollary \ref{Cor4}, we have,
\begin{corollary}\label{Cor6} 
Let $\delta, \epsilon, \tau \in \mathbb{C}$ such that $\Re(\delta)>0, \Re(\tau)<1+min[0, \Re(\epsilon)]$ and let $\tau-\epsilon \neq 1,2,...$, then
\begin{eqnarray*}
\left( K_{\epsilon, \delta}^{-}t^{\tau-1}~M_{n}^{\left( p,q\right)
}\left( \frac{1}{t}\right) \right) (x)
=\frac{\left( -1\right) ^{n}\Gamma(1+q+n)\Gamma \left(\epsilon-\tau+1 \right)}{\Gamma \left( 1+q\right)
\Gamma \left(\delta+\epsilon-\tau+1\right) }%
x^{\tau-1} \\
\times _{3}F_{2}\left[ 
\begin{array}{c}
1+n-p,-n,\epsilon-\tau+1,  \\ 
1+q,\delta+\epsilon-\tau+1, 
\end{array}%
\left\vert 
\begin{array}{c}
-\frac{1}{x}%
\end{array}%
\right. \right] 
\end{eqnarray*}
\end{corollary}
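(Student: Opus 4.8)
The plan is to obtain Corollary~\ref{Cor6} as the special case $\mu=0$ of Corollary~\ref{Cor4}, so that no fresh integration is required. First I would set $\mu=0$ throughout the statement of Corollary~\ref{Cor4}. On the operator side, the right-sided Marichev--Saigo--Maeda operator $I_{-}^{\delta,\mu,\epsilon}$ collapses at $\mu=0$ to the right-sided Erd\'elyi--Kober operator written $K_{\epsilon,\delta}^{-}$; the consistency of this identification is guaranteed term-by-term by the power-function formula \eqref{lem4-eqn3}, which is exactly \eqref{lem4-eqn1} evaluated at $\mu=0$ once the cancellation $\Gamma(\mu-\tau+1)=\Gamma(1-\tau)$ is made.

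Next I would simplify the Gamma prefactor of Corollary~\ref{Cor4}. Setting $\mu=0$ turns the factor $\Gamma(\mu-\tau+1)$ into $\Gamma(1-\tau)$, which cancels the $\Gamma(1-\tau)$ in the denominator; the power of $x$ reduces from $x^{\tau-\mu-1}$ to $x^{\tau-1}$, and $\Gamma(\delta+\mu+\epsilon-\tau+1)$ becomes $\Gamma(\delta+\epsilon-\tau+1)$. This produces precisely the prefactor $\frac{(-1)^{n}\Gamma(1+q+n)\Gamma(\epsilon-\tau+1)}{\Gamma(1+q)\Gamma(\delta+\epsilon-\tau+1)}\,x^{\tau-1}$ claimed in Corollary~\ref{Cor6}.

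For the hypergeometric part, the same cancellation acts inside the series. In the ${}_4F_3$ of Corollary~\ref{Cor4} the upper parameter $\mu-\tau+1$ and the lower parameter $1-\tau$ coincide when $\mu=0$, so the corresponding Pochhammer symbols $(1-\tau)_k$ cancel in every summand. The remaining upper parameters $1+n-p,-n,\epsilon-\tau+1$ over the lower parameters $1+q,\delta+\epsilon-\tau+1$ collapse the series to a terminating ${}_3F_2$ in the argument $-1/x$, which is exactly the stated result.

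Finally I would verify the parameter conditions. Putting $\mu=0$ in the admissibility range of Corollary~\ref{Cor4} gives $\Re(\tau)<1+\min[0,\Re(\epsilon)]$, which is the hypothesis of Corollary~\ref{Cor6}; the extra requirement $\tau-\epsilon\neq1,2,\dots$ is precisely what keeps $\Gamma(\epsilon-\tau+1)$ away from its poles, so that both the prefactor and the (finite, thanks to $(-n)_k$) series are well defined. The only point deserving genuine care is the operator identification $I_{-}^{\delta,0,\epsilon}=K_{\epsilon,\delta}^{-}$ and the honest bookkeeping of the convergence strip after specialization; everything else is pure cancellation. Should a self-contained derivation be preferred, an equivalent route is to expand $M_{n}^{(p,q)}(1/t)$ by \eqref{Eq2}, interchange summation and integration, apply \eqref{lem4-eqn3} term-by-term, and resum via \eqref{GR}, exactly as in the proof of Theorem~\ref{Th2}.
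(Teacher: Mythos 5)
Your proposal is correct and is essentially identical to the paper's route: the paper's entire proof of Corollary~\ref{Cor6} is the single phrase ``Substituting $\mu=0$ in Corollary~\ref{Cor4}'', and you supply exactly the details that make it work --- the identification $I_{-}^{\delta,0,\epsilon}=K_{\epsilon,\delta}^{-}$, the cancellation of $\Gamma(1-\tau)$ in the prefactor, and the collapse of the ${}_4F_3$ to a ${}_3F_2$ because the upper parameter $\mu-\tau+1$ coincides with the lower parameter $1-\tau$ at $\mu=0$. The only caveat is that Corollary~\ref{Cor4} as printed omits the factor $(-1)^{n}$ (present in Theorem~\ref{Th2}, from which it is derived) and misprints the inequality on $\Re(\tau)$, so your claim that its specialization gives ``precisely'' the prefactor and hypotheses of Corollary~\ref{Cor6} holds for the corrected statement rather than the printed one --- a typo you silently (and correctly) repair.
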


\section{Fractional differentials of Jacobi type polynomials}

\label{Sec3}

In this section devoted to derive the MSM fractional differentiation of \eqref{Eq2}. We recall the following lemmas
(see \cite{Kataria}).

\vskip 3mm

\begin{lemma}
\label{lem-5} Let $\delta $, $\delta ^{\prime },$ $\mu ,$ $\mu ^{\prime }$%
, $\epsilon $, $\tau \in \mathbb{C}$ such that 
\begin{equation*}
\Re \left( \tau \right) >\max \left\{ 0,\Re \left( -\delta +\mu \right)
,\Re \left( -\delta -\delta ^{\prime }-\mu +\epsilon \right)
\right\} .
\end{equation*}%
Then 
\begin{eqnarray}
&&\left( D_{0+}^{\delta ,\delta ^{\prime },\mu ,\mu ^{\prime },\epsilon
}t^{\tau -1}\right) \left( x\right)  \label{D1} \\
&=&\frac{\Gamma \left( \tau \right) \Gamma \left( -\mu +\delta +\tau
\right) \Gamma \left( \delta +\delta ^{\prime }+\mu ^{^{\prime }}-\epsilon
+\tau \right) }{\Gamma \left( -\mu +\tau \right) \Gamma \left( \delta
+\delta ^{^{\prime }}-\epsilon +\tau \right) \Gamma \left( \delta +{\mu }%
^{^{\prime }}-\epsilon +\tau \right) }x^{\delta +\delta ^{\prime }-\epsilon
+\tau -1}.  \notag
\end{eqnarray}
\end{lemma}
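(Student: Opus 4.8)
The plan is to unwind the definition \eqref{Dif-1}, which expresses $D_{0+}^{\delta,\delta',\mu,\mu',\epsilon}$ as an ordinary derivative of integer order $m:=[\Re(\epsilon)]+1$ applied to a left-sided MSM integral whose five parameters are permuted and shifted. Writing
\[
\left(D_{0+}^{\delta,\delta',\mu,\mu',\epsilon}\,t^{\tau-1}\right)(x)=\frac{d^{m}}{dx^{m}}\left(I_{0+}^{-\delta',-\delta,-\mu'+m,-\mu,-\epsilon+m}\,t^{\tau-1}\right)(x),
\]
I observe that the inner operator is again of MSM type, so Lemma \ref{lem-1} applies once its five slots are read off as $(-\delta',-\delta,-\mu'+m,-\mu,-\epsilon+m)$. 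The integrability requirement $\Re(-\epsilon+m)>0$ is automatic, since $m-\Re(\epsilon)=[\Re(\epsilon)]+1-\Re(\epsilon)\in(0,1]$.

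Substituting these parameters into the formula of Lemma \ref{lem-1} and simplifying the six Gamma arguments, the two occurrences of $m$ cancel inside each argument except one; the net effect is an intermediate power $x^{\tau+\delta+\delta'-\epsilon+m-1}$ carrying the ratio
\[
\frac{\Gamma(\tau)\,\Gamma(\tau+\delta+\delta'+\mu'-\epsilon)\,\Gamma(\tau-\mu+\delta)}{\Gamma(\tau-\mu)\,\Gamma(\tau+\delta+\delta'-\epsilon+m)\,\Gamma(\tau+\delta+\mu'-\epsilon)}.
\]
Next I would apply the elementary power rule $\frac{d^{m}}{dx^{m}}x^{s}=\frac{\Gamma(s+1)}{\Gamma(s+1-m)}x^{s-m}$ with $s=\tau+\delta+\delta'-\epsilon+m-1$, which contributes the factor $\Gamma(\tau+\delta+\delta'-\epsilon+m)/\Gamma(\tau+\delta+\delta'-\epsilon)$ and lowers the exponent to $\tau+\delta+\delta'-\epsilon-1$, already the claimed power of $x$.

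The decisive step is the cancellation: the Gamma factor $\Gamma(\tau+\delta+\delta'-\epsilon+m)$ produced by the differentiation is exactly the denominator factor generated by Lemma \ref{lem-1}, so the integer $m=[\Re(\epsilon)]+1$ disappears completely and one is left precisely with
\[
\frac{\Gamma(\tau)\,\Gamma(-\mu+\delta+\tau)\,\Gamma(\delta+\delta'+\mu'-\epsilon+\tau)}{\Gamma(-\mu+\tau)\,\Gamma(\delta+\delta'-\epsilon+\tau)\,\Gamma(\delta+\mu'-\epsilon+\tau)}\,x^{\delta+\delta'-\epsilon+\tau-1}.
\]
I expect the main obstacle to be reconciling the hypotheses rather than the algebra: naively feeding the shifted parameters into Lemma \ref{lem-1} yields a convergence condition on $\Re(\tau)$ that still involves $m$, whereas the stated hypothesis $\Re(\tau)>\max\{0,\Re(-\delta+\mu),\Re(-\delta-\delta'-\mu+\epsilon)\}$ is $m$-free. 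The clean way to justify this is to note that both sides are meromorphic in $\tau$, so the identity, once established on the region where the inner integral converges absolutely, extends by analytic continuation in $\tau$ to the full region in the statement; the $m$-dependence of the intermediate convergence condition is an artifact that the differentiation removes, which is the conceptual reason the derivative formula has the same shape as the integral formulas of Lemmas \ref{lem-1}--\ref{lem-2}.
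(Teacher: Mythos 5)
The paper itself contains no proof of Lemma \ref{lem-5}: it is simply recalled from \cite{Kataria}. Your derivation therefore supplies the argument the paper omits, and it is the standard one (essentially the proof in Kataria--Vellaisamy). Your computation is correct: writing $m=[\Re(\epsilon)]+1$, unwinding \eqref{Dif-1}, applying Lemma \ref{lem-1} to the inner operator with parameter vector $(-\delta',-\delta,-\mu'+m,-\mu,-\epsilon+m)$ (the condition $\Re(-\epsilon+m)>0$ is indeed automatic), and then using the power rule, the factor $\Gamma(\tau+\delta+\delta'-\epsilon+m)$ produced by the $m$-fold differentiation cancels the only $m$-dependent Gamma coming from Lemma \ref{lem-1}, leaving exactly the stated ratio of Gammas and the exponent $\delta+\delta'-\epsilon+\tau-1$.

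Two remarks on your final paragraph, where the only soft spot lies. First, the $m$-dependent convergence condition you worry about is an artifact of a misprint in the paper's Lemma \ref{lem-1}, whose hypothesis should read $\Re(\tau)>\max\{0,\Re(\delta+\delta'+\mu-\epsilon),\Re(\delta'-\mu')\}$ (as in Saigo--Maeda and Kilbas--Sebastian); these conditions are precisely what makes the real parts of the three numerator Gamma arguments positive. With the corrected hypothesis, substituting the shifted parameters gives $\Re(\tau)>\max\{0,\Re(\mu-\delta),\Re(\epsilon-\delta-\delta'-\mu')\}$: the two occurrences of $m$ cancel, so the condition is $m$-free and no continuation argument is needed. (This also shows that the third term in the hypothesis of Lemma \ref{lem-5} as printed should involve $\mu'$ rather than $\mu$.) Second, the analytic continuation argument as you state it is not rigorous on its own: the left-hand side is only \emph{defined} where the inner integral converges, so an identity between analytic functions of $\tau$ cannot be propagated to parameter values at which the operator itself is not given by a convergent integral. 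Fortunately this crutch is unnecessary once the correct hypothesis of Lemma \ref{lem-1} is used, and the rest of your proof stands as written.
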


\begin{lemma}
\label{lem-6} Let $\delta $, $\delta ^{\prime }$, $\mu $, $\mu ^{\prime }$%
, $\epsilon $, $\tau \in \mathbb{C}$ such that 
\begin{equation*}
\Re \left( \tau \right) >\max \left\{ \Re \left( -\mu ^{\prime }\right) ,\Re
\left( \delta ^{\prime }+\mu -\epsilon \right) ,\Re \left( \delta +\delta
^{^{\prime }}-\epsilon \right) +\left[ \Re \left( \epsilon \right) \right]
+1\right\} .
\end{equation*}%
Then the following formula holds true: 
\begin{eqnarray}
&&\left( D_{-}^{\delta ,\delta ^{\prime },\mu ,\mu ^{\prime },\epsilon
}t^{-\tau }\right) \left( x\right)  \label{D2} \\
&=&\frac{\Gamma \left( \mu ^{\prime }+\tau \right) \Gamma \left( -\delta
-\delta ^{\prime }+\epsilon +\tau \right) \Gamma \left( -\delta ^{\prime
}-\mu +\epsilon +\tau \right) }{\Gamma \left( \tau \right) \Gamma \left(
-\delta ^{\prime }+\mu ^{\prime }+\tau \right) \Gamma \left( -\delta
-\delta ^{\prime }-\mu +\epsilon +\tau \right) }x^{\delta +\delta ^{\prime
}-\epsilon -\tau }.  \notag
\end{eqnarray}
\end{lemma}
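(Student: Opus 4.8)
The plan is to work directly from the defining relation \eqref{Dif-2}, which expresses $D_-$ as an integer-order derivative of the integral operator $I_-$ with shifted parameters; in this way the whole computation reduces to the power-function formula already recorded in Lemma~\ref{lem-2} followed by an elementary differentiation. Writing $m=\left[\Re(\epsilon)\right]+1$, the definition gives
\begin{equation*}
\left( D_{-}^{\delta ,\delta ^{\prime },\mu ,\mu ^{\prime },\epsilon}t^{-\tau }\right)(x)=\left(-\frac{\mathrm{d}}{\mathrm{d}x}\right)^{m}\left( I_{-}^{-\delta ^{\prime},-\delta ,-\mu ^{\prime },-\mu +m,-\epsilon +m}t^{-\tau }\right)(x),
\end{equation*}
so the first task is to evaluate the inner operator on $t^{-\tau}$ and the second is to carry out the $m$-fold differentiation.

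First I would apply Lemma~\ref{lem-2} with $(\delta,\delta',\mu,\mu',\epsilon)$ replaced by $(-\delta',-\delta,-\mu',-\mu+m,-\epsilon+m)$. Because $\left[\Re(\epsilon)\right]\le\Re(\epsilon)<\left[\Re(\epsilon)\right]+1$, the shifted order obeys $\Re(-\epsilon+m)>0$, so the hypothesis $\Re(\epsilon)>0$ of Lemma~\ref{lem-2} holds automatically for the substituted parameters; feeding the same substitution into its $\tau$-restriction reproduces exactly the three bounds $\Re(-\mu')$, $\Re(\delta'+\mu-\epsilon)$ and $\Re(\delta+\delta'-\epsilon)+m$ of the stated hypothesis, so the application is admissible. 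The lemma then produces a constant multiple of $x^{\delta+\delta'-\epsilon+m-\tau}$ with constant
\begin{equation*}
\frac{\Gamma(\mu'+\tau)\,\Gamma(-\delta-\delta'+\epsilon-m+\tau)\,\Gamma(-\delta'-\mu+\epsilon+\tau)}{\Gamma(\tau)\,\Gamma(-\delta'+\mu'+\tau)\,\Gamma(-\delta-\delta'-\mu+\epsilon+\tau)}.
\end{equation*}

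Next I would differentiate: applying $\left(-\mathrm{d}/\mathrm{d}x\right)^{m}$ to $x^{\delta+\delta'-\epsilon+m-\tau}$ yields a sign $(-1)^{m}$, the ratio $\Gamma(\delta+\delta'-\epsilon+m-\tau+1)/\Gamma(\delta+\delta'-\epsilon-\tau+1)$, and the exponent $\delta+\delta'-\epsilon-\tau$ that already matches \eqref{D2}. All the gamma factors then coincide with those of \eqref{D2} except that, setting $a=-\delta-\delta'+\epsilon+\tau$, the $\epsilon\mapsto-\epsilon+m$ shift and the derivative leave behind $\Gamma(a-m)$ and $\Gamma(m+1-a)$ where $\Gamma(a)$ is wanted. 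The hard part will be exactly this gamma-function bookkeeping, which collapses to the single identity
\begin{equation*}
(-1)^{m}\,\frac{\Gamma(a-m)\,\Gamma(m+1-a)}{\Gamma(1-a)}=\Gamma(a).
\end{equation*}
I would settle it with Euler's reflection formula $\Gamma(z)\Gamma(1-z)=\pi/\sin(\pi z)$: writing $\Gamma(a-m)\Gamma(m+1-a)=\Gamma(a-m)\Gamma\bigl(1-(a-m)\bigr)=\pi/\sin\bigl(\pi(a-m)\bigr)$ and using $\sin\bigl(\pi(a-m)\bigr)=(-1)^{m}\sin(\pi a)$ cancels the derivative's sign $(-1)^{m}$ and leaves $\pi/\bigl(\sin(\pi a)\,\Gamma(1-a)\bigr)=\Gamma(a)$. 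Collecting the surviving factors then gives precisely \eqref{D2}.
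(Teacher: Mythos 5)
Your proof is correct, but there is nothing in the paper to compare it against: the paper does not prove Lemma \ref{lem-6} at all; it simply recalls it (together with Lemma \ref{lem-5}) from \cite{Kataria}. Your argument supplies the missing derivation using only tools already present in the paper, namely the definition \eqref{Dif-2} of $D_{-}$ as $\left(-\mathrm{d}/\mathrm{d}x\right)^{m}$ with $m=\left[\Re(\epsilon)\right]+1$ composed with $I_{-}^{-\delta',-\delta,-\mu',-\mu+m,-\epsilon+m}$, followed by Lemma \ref{lem-2} for those shifted parameters. The three delicate points are all handled correctly: (i) the substituted hypotheses of Lemma \ref{lem-2} reduce exactly to the three lower bounds assumed in Lemma \ref{lem-6}, with $\Re(-\epsilon+m)>0$ holding automatically; (ii) the six gamma factors and the exponent transform exactly as you state; (iii) the residual identity $(-1)^{m}\,\Gamma(a-m)\Gamma(m+1-a)/\Gamma(1-a)=\Gamma(a)$ with $a=-\delta-\delta'+\epsilon+\tau$ is true and is precisely what turns $\Gamma(a-m)$ (left over from the shift $\epsilon\mapsto-\epsilon+m$) and the derivative's ratio $\Gamma(m+1-a)/\Gamma(1-a)$ into the factor $\Gamma(-\delta-\delta'+\epsilon+\tau)$ appearing in \eqref{D2}. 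One small simplification: the reflection formula is not needed for that identity, since
\begin{equation*}
\frac{\Gamma(a)}{\Gamma(a-m)}=(a-1)(a-2)\cdots(a-m),\qquad \frac{\Gamma(m+1-a)}{\Gamma(1-a)}=(m-a)(m-1-a)\cdots(1-a),
\end{equation*}
and the two products of $m$ factors differ exactly by the sign $(-1)^{m}$; this elementary version also avoids any concern about $\sin(\pi a)$ vanishing. Your route (definition plus the $I_{-}$ image formula) is the standard one and is almost certainly the argument behind the result quoted from \cite{Kataria}, so adopting it makes the paper self-contained at this point.
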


\begin{theorem}
\label{Th3} Let $\delta, \delta^{\prime}, \mu, \mu^{\prime}, \epsilon, \tau \in \mathbb{C}$ such that\\
 $\Re(\tau+k)>max\{0, \Re(-\delta+\mu), \Re(-\delta-\delta^{\prime}-\mu+\epsilon)\}$, 
then the following formula hold true: 
\begin{eqnarray*}
&&\left( D_{0+}^{\delta ,\delta ^{\prime },\mu ,\mu ^{\prime },\epsilon
}t^{\tau -1}~M_{n}^{(p, q)}(t)\right)
\left( x\right) \\
&=&\frac{(-1)^{n}\Gamma(q+n+1)\Gamma(\tau)\Gamma(\tau+\delta-\mu)\Gamma(\tau+\delta+\delta^{\prime}+\mu^{\prime}-\epsilon)}{\Gamma(q+1)\Gamma(\tau-\mu)\Gamma(\tau+\delta+\delta^{\prime}-\epsilon)\Gamma(\tau+\delta+\mu^{\prime}-\epsilon)}x^{\delta +\delta ^{^{\prime }}-\epsilon +\tau-1} \\
&&\times _{5}F _{4}\left[ 
\begin{array}{c}
1+n-p, -n, \tau, -\mu+\delta+\tau, \delta+\delta^{\prime}+\mu^{\prime}-\epsilon+\tau , \\ 
q+1, -\mu^{\prime}+\tau, \delta+\delta^{\prime}-\epsilon+\tau, \delta+\mu^{\prime}-\epsilon+\tau ,%
\end{array}%
\Big|-x\right],
\end{eqnarray*}
\end{theorem}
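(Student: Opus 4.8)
The plan is to follow the same template as the proofs of Theorems~\ref{Th1} and~\ref{Th2}, substituting the differentiation formula of Lemma~\ref{lem-5} for the integration formulas used there. First I would denote the left-hand side by \emph{I}$_3$ and insert the explicit representation \eqref{Eq2} of $M_n^{(p,q)}(t)$, so that the operator $D_{0+}^{\delta,\delta',\mu,\mu',\epsilon}$ is applied to $t^{\tau-1}$ multiplied by the polynomial. Writing the two binomial coefficients through Gamma functions, $\binom{p-n-1}{k}=\Gamma(p-n)/\bigl(k!\,\Gamma(p-n-k)\bigr)$ and $\binom{q+n}{n-k}=\Gamma(q+n+1)/\bigl(\Gamma(n-k+1)\Gamma(q+k+1)\bigr)$, reduces the whole expression to a sum of monomial images $\bigl(D_{0+}^{\delta,\delta',\mu,\mu',\epsilon}t^{\tau+k-1}\bigr)(x)$. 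Because $M_n^{(p,q)}$ is a polynomial the sum is finite (the factor $1/\Gamma(n-k+1)$ kills every term with $k>n$), so the interchange of $D_{0+}^{\delta,\delta',\mu,\mu',\epsilon}$ with the summation is justified purely by linearity, with no convergence question to settle.

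Second, I would apply Lemma~\ref{lem-5} term by term. The stated hypothesis is phrased exactly so that $\Re(\tau+k)>\max\{0,\Re(-\delta+\mu),\Re(-\delta-\delta'-\mu+\epsilon)\}$ holds for every admissible $k$, which is precisely the condition under which Lemma~\ref{lem-5} evaluates each monomial image. This produces, for each $k$, the power $x^{\delta+\delta'-\epsilon+\tau+k-1}$ together with the ratio of Gamma functions $\Gamma(\tau+k)\,\Gamma(-\mu+\delta+\tau+k)\,\Gamma(\delta+\delta'+\mu'-\epsilon+\tau+k)$ divided by $\Gamma(-\mu+\tau+k)\,\Gamma(\delta+\delta'-\epsilon+\tau+k)\,\Gamma(\delta+\mu'-\epsilon+\tau+k)$.

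Finally, I would factor out the common power $x^{\delta+\delta'-\epsilon+\tau-1}$ and the $k=0$ value of every Gamma factor, and convert each shifted Gamma function into a Pochhammer symbol by means of \eqref{GR}, $\Gamma(x+k)=(x)_k\Gamma(x)$. The two binomial pieces then collapse to $(1+n-p)_k$ and $(-n)_k$, the factor $\Gamma(q+1+k)$ yields $(1+q)_k$, and the three numerator Gamma functions $\Gamma(\tau+k)$, $\Gamma(-\mu+\delta+\tau+k)$, $\Gamma(\delta+\delta'+\mu'-\epsilon+\tau+k)$ together with the three denominator Gamma functions $\Gamma(-\mu+\tau+k)$, $\Gamma(\delta+\delta'-\epsilon+\tau+k)$, $\Gamma(\delta+\mu'-\epsilon+\tau+k)$ of Lemma~\ref{lem-5} supply the remaining upper and lower parameters; combined with the surviving $x^k/k!$ and compared with \eqref{eqn-1-hyper}, this identifies the series as the claimed ${}_5F_4$ in the variable $-x$. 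I expect the only real difficulty to be the bookkeeping: correctly matching these six Gamma ratios to the upper and lower parameter rows, and tracking the signs when rewriting $\Gamma(p-n)/\Gamma(p-n-k)$ and $n!/\Gamma(n-k+1)$ as $(1+n-p)_k$ and $(-n)_k$, since the entire analytic substance is delivered by Lemma~\ref{lem-5}.
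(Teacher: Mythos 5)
Your proposal is correct and is exactly the argument the paper intends: the paper states Theorem~\ref{Th3} without writing out a proof, leaving it as the obvious analogue of the proofs of Theorems~\ref{Th1} and~\ref{Th2} with Lemma~\ref{lem-5} replacing Lemma~\ref{lem-1}, which is precisely the template you follow (expand via \eqref{Eq2}, interchange by linearity of the finite sum, apply Lemma~\ref{lem-5} termwise, convert to Pochhammer symbols via \eqref{GR}). Note that your derivation, faithfully tracking Lemma~\ref{lem-5}, produces the lower parameter $\tau-\mu$ in the ${}_5F_4$, which shows that the parameter $-\mu^{\prime}+\tau$ printed in the theorem's statement is a typo and confirms your bookkeeping is the right one (it matches the Gamma factor $\Gamma(\tau-\mu)$ in the prefactor).
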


\begin{theorem}
\label{Th4} Let $\delta, \delta^{\prime}, \mu, \mu^{\prime}, \epsilon, \tau \in \mathbb{C}$ such that\\
 $\Re(\tau+k)>max\{0, \Re(-\mu'), \Re(\delta^{\prime}+\mu-\epsilon), \Re(\delta+\delta^{\prime}-\epsilon)+[\Re(\epsilon)]+1\}$, then the following formula hold true: 
\begin{eqnarray*}
&&\left( D_{-}^{\delta ,\delta ^{\prime },\mu ,\mu ^{\prime },\epsilon
}t^{-\tau}~M_{n}^{(p,q)}\left( \frac{1}{t}%
\right) \right) \left( x\right) \\
&=&\frac{(-1)^{n}\Gamma(q+n+1)\Gamma(\mu^{\prime }+\tau)\Gamma(-\delta-\delta^{\prime }+\epsilon+\tau)\Gamma(-\delta^{\prime }-\mu+\epsilon+\tau)}{\Gamma(q+1)\Gamma(\tau)\Gamma(-\delta^{\prime }+\mu^{\prime }+\tau)\Gamma(-\delta-\delta^{\prime }-\mu+\epsilon+\tau)} x^{\delta+\delta'-\epsilon-\tau}\\
&&\times _{5}F_{4}\left[ 
\begin{array}{c}
1+n-p, -n, \tau+\mu^{\prime }, \tau-\delta-\delta^{\prime }+\epsilon, \tau-\delta^{\prime }-\mu+\epsilon , \\ 
\tau, \tau-\delta^{\prime }+\mu^{\prime }, \tau+\epsilon-\delta-\delta^{\prime }-\mu, q+1,
\end{array}%
\Big|-\frac{1}{x}\right],
\end{eqnarray*}
provided both the sides exists.
\end{theorem}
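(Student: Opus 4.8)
The plan is to follow verbatim the template already used for Theorems \ref{Th1} and \ref{Th2}, with the integral operator replaced by the differential operator $D_{-}^{\delta,\delta^{\prime},\mu,\mu^{\prime},\epsilon}$ and with Lemma \ref{lem-6} playing the role that Lemma \ref{lem-2} played there. Denoting the left-hand side by $I_4$, the first step is to substitute the explicit finite expansion \eqref{Eq2} of the Jacobi type polynomial evaluated at $1/t$, giving
\[
t^{-\tau}\,M_{n}^{(p,q)}\!\left(\tfrac{1}{t}\right)
=(-1)^{n}n!\sum_{k=0}^{n}\binom{p-n-1}{k}\binom{q+n}{n-k}(-1)^{k}\,t^{-\tau-k}.
\]
Because $M_{n}^{(p,q)}$ is a polynomial of degree $n$, this is a \emph{finite} linear combination of powers $t^{-\tau-k}$, and $D_{-}^{\delta,\delta^{\prime},\mu,\mu^{\prime},\epsilon}$ is linear (a composition of the integral operator \eqref{Int-2} with $[\Re(\epsilon)]+1$ ordinary derivatives). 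Hence the interchange of the operator with the summation requires only linearity and raises no convergence question.

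The second step is to apply Lemma \ref{lem-6} to each monomial $t^{-\tau-k}$, that is, with $\tau$ replaced by $\tau+k$, yielding
\[
\left(D_{-}^{\delta,\delta^{\prime},\mu,\mu^{\prime},\epsilon}\,t^{-\tau-k}\right)(x)
=\frac{\Gamma(\mu^{\prime}+\tau+k)\,\Gamma(-\delta-\delta^{\prime}+\epsilon+\tau+k)\,\Gamma(-\delta^{\prime}-\mu+\epsilon+\tau+k)}{\Gamma(\tau+k)\,\Gamma(-\delta^{\prime}+\mu^{\prime}+\tau+k)\,\Gamma(-\delta-\delta^{\prime}-\mu+\epsilon+\tau+k)}\,x^{\delta+\delta^{\prime}-\epsilon-\tau-k}.
\]
The hypothesis $\Re(\tau+k)>\max\{0,\Re(-\mu^{\prime}),\Re(\delta^{\prime}+\mu-\epsilon),\Re(\delta+\delta^{\prime}-\epsilon)+[\Re(\epsilon)]+1\}$ is precisely the condition of Lemma \ref{lem-6} stated uniformly in the shifted argument, so the lemma is legitimately applicable for every index $k\ge 0$.

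The third step is algebraic bookkeeping identical to that in the proof of Theorem \ref{Th2}. I would rewrite the two binomial coefficients in Pochhammer form, $\binom{p-n-1}{k}=(-1)^{k}(1+n-p)_{k}/k!$ and $\binom{q+n}{n-k}=(-1)^{k}\Gamma(q+n+1)(-n)_{k}/\bigl(n!\,\Gamma(q+1+k)\bigr)$, so that the whole combinatorial prefactor collapses to
\[
\frac{(-1)^{n}\Gamma(q+n+1)\,(1+n-p)_{k}\,(-n)_{k}\,(-1)^{k}}{k!\,\Gamma(q+1+k)}.
\]
Then, invoking the relation \eqref{GR}, $\Gamma(y+k)=(y)_{k}\Gamma(y)$, on every Gamma function carrying a $+k$ shift (the six Gammas coming from Lemma \ref{lem-6} together with $\Gamma(q+1+k)$), I factor out the $k$-independent Gamma quotient that constitutes the prefactor of the theorem and am left with the Pochhammer symbols $\tau+\mu^{\prime}$, $\tau-\delta-\delta^{\prime}+\epsilon$, $\tau-\delta^{\prime}-\mu+\epsilon$ in the numerator and $\tau$, $\tau-\delta^{\prime}+\mu^{\prime}$, $\tau+\epsilon-\delta-\delta^{\prime}-\mu$, $q+1$ in the denominator. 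Pulling the common power $x^{\delta+\delta^{\prime}-\epsilon-\tau}$ out of $x^{\delta+\delta^{\prime}-\epsilon-\tau-k}$ leaves $x^{-k}$, which absorbs the surviving $(-1)^{k}$ into $(-1/x)^{k}$. Comparison with the definition \eqref{eqn-1-hyper} then identifies the series as the stated ${}_{5}F_{4}$ with argument $-1/x$.

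I do not anticipate a deep obstacle, since the structure is parallel to Theorem \ref{Th2}; the two points demanding care are specific to the differential case. First, one must verify that the shift $\tau\mapsto\tau+k$ preserves all four inequalities of Lemma \ref{lem-6}, in particular the condition involving $\Re(\delta+\delta^{\prime}-\epsilon)+[\Re(\epsilon)]+1$, which is the one qualitatively new feature reflecting the $[\Re(\epsilon)]+1$ derivatives built into $D_{-}$ in \eqref{Dif-2}; this is exactly what the hypothesis on $\Re(\tau+k)$ secures. Second, the sign tracking must be done scrupulously so that the $(-1)^{k}$ produced by the expansion and the Pochhammer conversions combines with $x^{-k}$ to give precisely $(-1/x)^{k}$. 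The qualifier ``provided both sides exist'' then simply records the exclusion of those parameter values for which the denominator Gamma factors of Lemma \ref{lem-6} develop poles.
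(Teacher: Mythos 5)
Your proposal is correct and follows exactly the route the paper intends: the paper states Theorem \ref{Th4} without writing out a proof, leaving it to the template of Theorems \ref{Th1} and \ref{Th2} — expand $M_{n}^{(p,q)}(1/t)$ via \eqref{Eq2}, apply Lemma \ref{lem-6} termwise with $\tau\mapsto\tau+k$, convert the binomials and shifted Gammas to Pochhammer form via \eqref{GR}, and identify the resulting series as the stated ${}_{5}F_{4}$ in $-1/x$. Your handling of the finite-sum interchange and of the sign bookkeeping matches (and is in fact slightly more careful than) the corresponding computations the paper carries out for its integral-operator analogues.
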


\section{Conclusion}
The generalized fractional integration and differentiation of of Jacobi type orthogonal polynomials are derived in this paper. Many known results (see \cite{Malik}, \cite{Kilbas-itsf}) reduced as the particular case of the main theorems. This concludes that the results obtained here are general in nature and can easily obtain various known results.


\begin{thebibliography}{99}

\bibitem{Baleanu} D. Baleanu, D. Kumar and S.D. Purohit, Generalized
fractional integrals of product of two H-functions and a general class of
polynomials, Int. J. Comput. Math. \textbf{93}(8) (2016), 1320--1329.

\bibitem{Foxc} C. Fox, The asymptotic expansion of generalized
hypergeometric functions, Proc. London. Math. Soc. \textbf{27}(4) (1928),
389--400.

\bibitem{Kataria} K.K. Kataria and P. Vellaisamy, The generalized $k$-Wright
function and Marichev-Saigo-Maeda fractional operators, 
J. Analysis, 23 (2015), 75–87.

%

\bibitem{Kilbas-itsf} A.A. Kilbas and N. Sebastian, Generalized fractional
integration of Bessel function of the first kind, Integral Transforms Spec.
Funct. \textbf{19}(12) (2008), 869--883.


\bibitem{Kim} Y.C. Kim, K.S. Lee, and H.M. Srivastava, Some applications of
fractional integral operators and Ruscheweyh derivatives, J. Math. Anal.
Appl., \textbf{197}(2) (1996), 505--517.

\bibitem{Kiryakova} V. Kiryakova, All the special functions are fractional
differintegrals of elementary functions, J. Physics A: Math. Gen., \textbf{30%
}(14) (1997), 5085--5103.

\bibitem{Kumar} D. Kumar, S.D. Purohit, J. Choi, Generalized
fractional integrals involving product of multivariable H-function and a
general class of polynomials, J. Nonlinear Sci. Appl. \textbf{9}(1) (2016),
8--21.

\bibitem{Malik} P. Malik and S. R. Mondal, Some composition formulas of Jacobi type orthogonal polynomials. 
Commun. Korean Math. Soc. 32 (3)(2017), 677--688 

\bibitem{Malik1} P. Malik, S. R. Mondal, and A. Swaminathan, Riemann-Liouville Fractional 
calculus for generalized Bessel functions, Proceedings of the ASME 2011, 
International Design Engineering Technical Conferences and Computers and Information in Engineering Conference
3 (2011), 409–418.

\bibitem{Malik2} P. Malik and A. Swaminathan, Derivatives of a finite class of orthogonal polynomials
defined on positive real line related to F-Distribution, Comput. Math. Appl. 61(4), (2011), 1180--1189.

\bibitem{Malik3} P. Malik and A. Swaminathan, Derivatives of a finite class of orthogonal 
polynomials relate distribution, Appl. Math. Comput. 218(11) (2012),  6251--6262.


\bibitem{Marichev} O.I. Marichev, Volterra equation of Mellin convolution
type with a Horn function in the kernel, Izvestiya Akademii Nauk BSSR.
Seriya Fiziko-Matematicheskikh Nauk, \textbf{1} (1974), 128--129 (Russian).

\bibitem{Miller} K.S. Miller and B. Ross, \textit{An Introduction to the
Fractional Calculus and Fractional Differential equations}, John Wiley \&
Sons, New York, USA, 1993.

\bibitem{SRN} S.R. Mondal and K.S. Nisar, Marichev-Saigo-Maeda fractional
integration operators involving generalized Bessel functions, Math. Probl.
Eng., \textbf{2014} (2014), Article ID 274093, 11 pp.


\bibitem{Purohit} S.D. Purohit, D.L. Suthar and S.L. Kalla,
Marichev-Saigo-Maeda fractional integration operators of the Bessel
function, Le Matematiche, \textbf{67}(1) (2012), 21--32.

\bibitem{Rainville} E.D. Rainville, \textit{Special functions}, Macmillan,
New York, 1960.

\bibitem{Saigo} M. Saigo and N. Maeda, More generalization of fractional calculus, 
in Transform Methods \& Special Functions, Varna '96, pp.
386--400, Bulgarian Academy of Sciences, Bulgaria, Sofia, 1998.

\bibitem{Srivastava2} H.M. Srivastava, S.-D. Lin, and P.-Y. Wang, Some fractional calculus 
results for the $H$-function associated with a class of Feynman integrals, 
Russ. J. Math. Phys., \textbf{13}(1) (2006), 94--100.
%
%
\end{thebibliography}
\end{document}